\newcommand{\R}{\mathbb{R}}
\newcommand{\der}{\mathrm{d}}
\newcommand{\sulut}[1]{\left( #1 \right)}
\DeclareMathOperator{\dive}{div}
\title{Inverse problems for $p$-Laplace type equations under monotonicity assumptions}
\author[Chang-Yu Guo]{Chang-Yu Guo}
\address{Department of Mathematics and Statistics, University of Jyv\"askyl\"a and Department of Mathematics, University of Fribourg}
\email{changyu.guo@unifr.ch}
\author[Manas Kar]{Manas Kar}
\address{Department of Mathematics and Statistics, University of Jyv\"askyl\"a}
\email{manas.m.kar@maths.jyu.fi}
\author[Mikko Salo]{Mikko Salo}
\address{Department of Mathematics and Statistics, University of Jyv\"askyl\"a}
\email{mikko.j.salo@jyu.fi}
\newcommand*{\eps}{\varepsilon}
\newcommand{\abs}[1]{\left| #1 \right|}
\newcommand{\norm}[1]{\left\| #1 \right\|}
\theoremstyle{plain}
\newtheorem{thm}{Theorem}[section]
\newtheorem{lemma}[thm]{Lemma}
\newtheorem*{question}{Question}
\theoremstyle{definition}
\numberwithin{equation}{section}
\newcommand{\nocontentsline}[3]{}
\newcommand{\tocless}[2]{\bgroup\let\addcontentsline=\nocontentsline#1{#2}\egroup}
\renewcommand{\phi}{\varphi}
\newcommand{\mR}{\mathbb{R}}
\newcommand{\mdiv}{\mathrm{div}}
\def\vint_#1{\mathchoice%
          {\mathop{\kern 0.2em\vrule width 0.6em height 0.69678ex
depth -0.58065ex
                  \kern -0.8em \intop}\nolimits_{\kern -0.4em#1}}%
          {\mathop{\kern 0.1em\vrule width 0.5em height 0.69678ex
depth -0.60387ex
                  \kern -0.6em \intop}\nolimits_{#1}}%
          {\mathop{\kern 0.1em\vrule width 0.5em height 0.69678ex
              depth -0.60387ex
                  \kern -0.6em \intop}\nolimits_{#1}}%
          {\mathop{\kern 0.1em\vrule width 0.5em height 0.69678ex
depth -0.60387ex
                  \kern -0.6em \intop}\nolimits_{#1}}}
\def\vintslides_#1{\mathchoice%
          {\mathop{\kern 0.1em\vrule width 0.5em height 0.697ex depth -0.581ex
                  \kern -0.6em \intop}\nolimits_{\kern -0.4em#1}}%
          {\mathop{\kern 0.1em\vrule width 0.3em height 0.697ex depth -0.604ex
                  \kern -0.4em \intop}\nolimits_{#1}}%
          {\mathop{\kern 0.1em\vrule width 0.3em height 0.697ex depth -0.604ex
                  \kern -0.4em \intop}\nolimits_{#1}}%
          {\mathop{\kern 0.1em\vrule width 0.3em height 0.697ex depth -0.604ex
                  \kern -0.4em \intop}\nolimits_{#1}}}
\newcommand{\aveint}[2]{\mathchoice%
          {\mathop{\kern 0.2em\vrule width 0.6em height 0.69678ex
depth -0.58065ex
                  \kern -0.8em \intop}\nolimits_{\kern -0.45em#1}^{#2}}%
          {\mathop{\kern 0.1em\vrule width 0.5em height 0.69678ex
depth -0.60387ex
                  \kern -0.6em \intop}\nolimits_{#1}^{#2}}%
          {\mathop{\kern 0.1em\vrule width 0.5em height 0.69678ex
depth -0.60387ex
                  \kern -0.6em \intop}\nolimits_{#1}^{#2}}%
          {\mathop{\kern 0.1em\vrule width 0.5em height 0.69678ex
depth -0.60387ex
                  \kern -0.6em \intop}\nolimits_{#1}^{#2}}}
\numberwithin{equation}{section}
\begin{document}

\date{\today}

\dedicatory{Dedicated to Giovanni Alessandrini on the occasion of his 60th birthday}

\begin{abstract}
We consider inverse problems for $p$-Laplace type equations under monotonicity assumptions. In two dimensions, we show that any two conductivities satisfying $\sigma_1 \geq \sigma_2$ and having the same nonlinear Dirichlet-to-Neumann map must be identical. The proof is based on a monotonicity inequality and the unique continuation principle for $p$-Laplace type equations. In higher dimensions, where unique continuation is not known, we obtain a similar result for conductivities close to constant.
\end{abstract}

\maketitle

\section{Introduction} \label{sec_intro}
 
The inverse conductivity problem posed by Calder\'on asks if the electrical conductivity of a medium can be determined by voltage and current measurements on its boundary. If $\Omega \subset \mathbb{R}^n$ is a bounded open set representing the medium, and if $\sigma \in L^{\infty}_+(\Omega)$ is a function representing the electrical conductivity, then Ohm's and Kirchhoff's laws imply that given a boundary voltage $f$, the electrical potential $u$ in $\Omega$ will solve the conductivity equation 
\begin{align*}
\left\{ \begin{array}{rl} \text{div}(\sigma \nabla u) &\!\!\!= 0 \ \text{ in } \Omega, \\
u &\!\!\!= f \ \text{ on } \partial \Omega. \end{array} \right.
\end{align*}
Here and below we write 
\[
L^{\infty}_+(\Omega) = \{ \sigma \in L^{\infty}(\Omega) \,;\, \sigma \geq c_0 > 0 \text{ a.e.\ in $\Omega$ for some $c_0 > 0$} \}.
\]
If $X(\Omega)$ is a function space (such as the space $W^{1,\infty}(\Omega)$ of Lipschitz functions), we also write 
\begin{align*}
X_+(\Omega) = X(\Omega) \cap L^{\infty}_+(\Omega).
\end{align*}

The boundary measurements are encoded by the Dirichlet-to-Neumann map (DN map) 
\[
\Lambda_{\sigma}: f \mapsto \sigma \partial_{\nu} u|_{\partial \Omega}
\]
where $\sigma \partial_{\nu} u|_{\partial \Omega}$ is the electrical current flowing through the boundary, and the normal derivative $\partial_{\nu}$ is defined in a suitable weak sense. The inverse problem is to determine the conductivity $\sigma$ from knowledge of the DN map $\Lambda_{\sigma}$. This question, known as the Calder\'on problem, is a fundamental inverse problem with applications in industrial and medical imaging and having connections to many other inverse problems. Both the theoretical and applied aspects of the Calder\'on problem have been studied intensively in the last 35 years. We refer to the survey \cite{Uhlmann:2014} for more information.

In this paper we consider a nonlinear variant of the Calder\'on problem. Here the standard Ohm's law $j = -\sigma \nabla u$ relating the current $j$ and potential $u$ is replaced by the nonlinear law 
\[
j = -\sigma \abs{\nabla u}^{p-2} \nabla u
\]
where $p$ is a real number with $1 < p < \infty$. Combining this with Kirchhoff's law stating that $j$ is divergence free, we obtain the boundary value problem 
\begin{align*}
\left\{�\begin{array}{rl} \text{div}(\sigma \abs{\nabla u}^{p-2} \nabla u) &\!\!\!= 0 \ \text{ in } \Omega, \\
u &\!\!\!= f \ \text{ on } \partial \Omega. \end{array} \right.
\end{align*}
The boundary measurements are encoded by the nonlinear DN map 
\[
\Lambda_{\sigma}: f \mapsto \sigma \abs{\nabla u}^{p-2} \partial_{\nu} u|_{\partial \Omega}
\]
defined in a suitable weak sense. The inverse problem is to determine the conductivity $\sigma$ from knowledge of the nonlinear map $\Lambda_{\sigma}$.

The equation $\text{div}(\sigma \abs{\nabla u}^{p-2} \nabla u) = 0$ is called the weighted $p$-Laplace equation (with weight given by the positive function $\sigma$), and it is the Euler-Lagrange equation related to minimizing the $p$-Dirichlet energy $E(u) = \int_{\Omega} \sigma \abs{\nabla u}^p \,dx$. The case $p=2$ is just the linear conductivity equation, but if $p \neq 2$ this is a quasilinear degenerate elliptic equation. The $p$-Laplace equation appears as a model for nonlinear dielectrics, plastic moulding, electro-rheological and thermo-rheological fluids, fluids governed by a power law, viscous flows in glaciology, or plasticity. The limiting cases $p=0$ and $p=1$ also arise in hybrid imaging inverse problems such as ultrasound modulated electrical impedance tomography (UMEIT) and current density imaging (CDI). See the references in \cite{Brander:Kar:Salo:2014} for further information. The $p$-Laplace equation is of considerable mathematical interest as well, the case $p=n$ is useful in conformal geometry~\cite{Liimatainen:Salo:2012} and also the limiting cases $p=0,1,\infty$ are relevant. We refer to \cite{Heinonen:Kilpelainen:Martio:1993}, \cite{Lindqvist:2006}, \cite{Evans:2007} for further details on $p$-Laplace type equations.

The inverse problem of determining $\sigma$ from the nonlinear DN map $\Lambda_{\sigma}$ was introduced in \cite{Salo:Zhong:2012} as a nonlinear variant of the Calder\'on problem. There are several previous works related to Calder\'on type problems for nonlinear equations (see the references of \cite{Salo:Zhong:2012}), where the inverse problem is solved by linearizing the nonlinear DN map. However, the $p$-Laplace type model has the new feature that linearizations at constant boundary values do not give any new information, and thus genuinely nonlinear methods are required to treat the inverse problem. The work \cite{Salo:Zhong:2012} suggested a nonlinear version of the method of complex geometrical optics solutions that has been widely used in the original Calder\'on problem (see the survey \cite{Uhlmann:2014}). The nonlinear version of this method was based on $p$-harmonic functions introduced by Wolff \cite{Wolff:2007}.

We are aware of the following results on the inverse problem for $p$-Laplace type equations:
\begin{itemize}
\item 
(Boundary uniqueness \cite{Salo:Zhong:2012}) $\Lambda_{\sigma}$ determines $\sigma|_{\partial \Omega}$.
\item 
(Uniqueness for normal derivative \cite{Brander:2014}) $\Lambda_{\sigma}$ determines $\partial_{\nu} \sigma|_{\partial \Omega}$.
\item 
(Inclusion detection \cite{Brander:Kar:Salo:2014}) If $\sigma = 1$ in $\Omega \setminus \overline{D}$ and $\sigma \geq 1+\eps > 1$ in $D$ where $D \subset \Omega$ is an obstacle, then $\Lambda_{\sigma}$ determines the convex hull of $D$. Further results are given in \cite{Brander:Ilmavirta:Kar:2015}.
\end{itemize}
The first two results were based on Wolff type solutions and boundary determination arguments following Brown \cite{Brown:2001}. We remark that it would be interesting to see if also the boundary determination method based on singular solutions due to Alessandrini \cite{Alessandrini:1990} applies to $p$-Laplace type equations. The third result above extends the enclosure method for inclusion detection introduced by Ikehata \cite{Ikehata:2000b} to the $p$-Laplace case. The main new ingredient in \cite{Brander:Kar:Salo:2014} was a monotonicity inequality, which allows to estimate the difference of DN maps, $\Lambda_{\sigma_1} -\Lambda_{\sigma_2}$, under the condition $\sigma_1 \geq \sigma_2$. 

In this paper we continue the study of inverse problems for $p$-Laplace type equations. The main point is that a monotonicity assumption $\sigma_1 \geq \sigma_2$, together with the monotonicity inequality and the unique continuation principle for $p$-Laplace type equations in the plane \cite{Alessandrini:1987, bi84, Manfredi:1988, Alessandrini:Sigalotti:2001}, allows to establish interior uniqueness for the conductivities.

\begin{thm} \label{intro_thm1}
Let $\Omega \subset \mR^2$ be a bounded open set and let $1 < p < \infty$. If $\sigma_1, \sigma_2 \in W^{1,\infty}_+(\Omega)$ satisfy $\sigma_1 \geq \sigma_2$ \ in $\Omega$, then $\Lambda_{\sigma_1} = \Lambda_{\sigma_2}$ implies $\sigma_1 = \sigma_2$ in $\Omega$.
\end{thm}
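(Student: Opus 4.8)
The plan is to combine the variational characterization of solutions with the monotonicity inequality (in the spirit of \cite{Brander:Kar:Salo:2014}) and the unique continuation principle for planar $p$-Laplace type equations. Since both the equation $\dive(\sigma\abs{\nabla u}^{p-2}\nabla u)=0$ and the DN map $\Lambda_\sigma$ split over the connected components of $\Omega$, I may and will assume that $\Omega$ is connected.

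First I would record the monotonicity inequality in the form I need. For a boundary value $f$, write $u_i=u_{\sigma_i}^f\in W^{1,p}(\Omega)$ for the solution with boundary value $f$; equivalently $u_i$ is the unique minimizer of the $p$-Dirichlet energy $v\mapsto\int_\Omega\sigma_i\abs{\nabla v}^p\,dx$ among $W^{1,p}$ functions with that boundary value. Testing the weak formulation of the equation for $u_i$ against $u_i$ itself gives $\ip{\Lambda_{\sigma_i}f}{f}=\int_\Omega\sigma_i\abs{\nabla u_i}^p\,dx$. Using that $u_2$ minimizes the $\sigma_2$-energy and then writing $\sigma_2=\sigma_1-(\sigma_1-\sigma_2)$,
\[
\ip{\Lambda_{\sigma_2}f}{f}\;\le\;\int_\Omega\sigma_2\abs{\nabla u_1}^p\,dx\;=\;\ip{\Lambda_{\sigma_1}f}{f}-\int_\Omega(\sigma_1-\sigma_2)\abs{\nabla u_1}^p\,dx,
\]
that is, $\int_\Omega(\sigma_1-\sigma_2)\abs{\nabla u_1}^p\,dx\le\ip{(\Lambda_{\sigma_1}-\Lambda_{\sigma_2})f}{f}$. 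Since $\Lambda_{\sigma_1}=\Lambda_{\sigma_2}$ the right-hand side vanishes, while $\sigma_1-\sigma_2\ge0$ makes the integrand nonnegative; hence
\[
\int_\Omega(\sigma_1-\sigma_2)\abs{\nabla u_{\sigma_1}^f}^p\,dx=0\qquad\text{for every admissible }f.
\]

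Next I would feed in one non-constant boundary value and run a contradiction argument. Suppose $\sigma_1\neq\sigma_2$. Since $\sigma_1-\sigma_2$ is continuous (Lipschitz functions are continuous), the set $\{\sigma_1>\sigma_2\}$ is a nonempty open subset of $\Omega$, so it contains a ball $B$ on which $\sigma_1-\sigma_2\ge\delta>0$. Choose $f=x_1|_{\doo\Omega}$, with $x_1$ itself as an extension; the coordinate function $x_1$ is non-constant on $\doo\Omega$ because $\ol\Omega$ is compact, has no interior extremum of $x_1$, and is not contained in a line. Then the solution $u:=u_{\sigma_1}^f$ is non-constant, and from the displayed identity $\delta\int_B\abs{\nabla u}^p\,dx\le\int_\Omega(\sigma_1-\sigma_2)\abs{\nabla u}^p\,dx=0$, so $\nabla u=0$ a.e.\ in $B$; by the $C^{1,\alpha}$ regularity of solutions to $p$-Laplace type equations with Lipschitz coefficients, $\nabla u\equiv0$ in $B$, i.e.\ $u$ is constant on $B$. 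The unique continuation principle for $p$-Laplace type equations in the plane \cite{Alessandrini:1987, bi84, Manfredi:1988, Alessandrini:Sigalotti:2001} then forces $u$ to be constant on the connected set $\Omega$, contradicting that its boundary value $f$ is non-constant. Hence $\sigma_1\le\sigma_2$ in $\Omega$, and together with $\sigma_1\ge\sigma_2$ this gives $\sigma_1=\sigma_2$.

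The hard part — and the reason for both the two-dimensional restriction and the $W^{1,\infty}$ hypothesis on $\sigma_i$ — is this last step. Weak unique continuation (a solution that is constant on an open set is constant) for quasilinear degenerate equations of $p$-Laplace type is known only in the plane, where the gradient of a solution can be tied to a quasiregular mapping satisfying a Beltrami-type system; this reduction, going back to Bojarski--Iwaniec and Alessandrini, requires enough regularity of $\sigma$. Everything else here (the energy/minimization identities, the elementary monotonicity inequality, and the interior $C^{1,\alpha}$ regularity) is standard and dimension-independent.
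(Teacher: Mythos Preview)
Your argument is correct and follows the same overall architecture as the paper's proof of Theorem~\ref{main_theorem}: a monotonicity inequality forces $\nabla u$ to vanish on the set $\{\sigma_1>\sigma_2\}$, and then planar unique continuation (Theorem~\ref{thm:UCP 2d}) yields the contradiction. Your reduction to connected $\Omega$ and your verification that $f=x_1$ gives non-constant Dirichlet data are fine.

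The one genuine difference is \emph{which} monotonicity inequality you use. The paper invokes the left-hand (Young's inequality) bound of Lemma~\ref{needed_lemma}, namely
\[
(p-1)\int_\Omega \frac{\sigma_2}{\sigma_1^{1/(p-1)}}\bigl(\sigma_1^{1/(p-1)}-\sigma_2^{1/(p-1)}\bigr)\abs{\nabla u_2}^p\,dx \le \ip{(\Lambda_{\sigma_1}-\Lambda_{\sigma_2})f}{f},
\]
and therefore works with the solution $u_2$ for $\sigma_2$. You instead use only the elementary energy-comparison half of Lemma~\ref{needed_lemma} with the roles of $\sigma_1,\sigma_2$ interchanged, obtaining
\[
\int_\Omega(\sigma_1-\sigma_2)\abs{\nabla u_1}^p\,dx \le \ip{(\Lambda_{\sigma_1}-\Lambda_{\sigma_2})f}{f},
\]
and then apply unique continuation to the $\sigma_1$-equation. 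This is a pleasant simplification: it avoids the Young's inequality computation entirely and uses only the trivial direction of the monotonicity lemma, at no cost since $\sigma_1\in W^{1,\infty}_+$ as well, so UCP applies equally to $u_1$. The paper's version, on the other hand, is what one needs if one later wants two-sided bounds (as in the inclusion-detection results of \cite{Brander:Kar:Salo:2014}); for the bare uniqueness statement your shortcut suffices.
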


In three and higher dimensions, the unique continuation principle even for the standard $p$-Laplace equation remains an important open question (see for instance \cite{Granlund:Marola:2014}). We obtain the following partial result under the additional assumption that one of the conductivities is close to a constant.

\begin{thm} \label{intro_thm2}
Let $\Omega \subset \mR^n$ with $n \geq 2$ be a bounded open set with $C^{1,\alpha}$ boundary where $0 < \alpha < 1$, let $1 < p < \infty$, and let $M > 0$. There is a constant $\eps = \eps(n,p,\alpha,\Omega,M) > 0$ such that if $\sigma_1, \sigma_2 \in C^{\alpha}(\overline{\Omega})$ satisfy $\sigma_1 \geq \sigma_2$ in $\Omega$ and 
\[
1/M \leq \sigma_1 \leq M, \qquad \norm{\sigma_1}_{C^{\alpha}(\overline{\Omega})} \leq M, \qquad \norm{\sigma_2 - 1}_{C^{\alpha}(\overline{\Omega})} \leq \eps,
\]
then $\Lambda_{\sigma_1} = \Lambda_{\sigma_2}$ implies $\sigma_1 = \sigma_2$ in $\Omega$.
\end{thm}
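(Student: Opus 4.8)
The plan is to reduce the higher-dimensional problem to the same mechanism that drives Theorem~\ref{intro_thm1}, namely the combination of the monotonicity inequality with a unique continuation statement, except that now the only unique continuation available is the one for the \emph{constant-coefficient} $p$-Laplace equation, which does hold in all dimensions (a solution vanishing on an open set vanishes identically). The point of the smallness hypothesis $\norm{\sigma_2 - 1}_{C^\alpha(\overline\Omega)} \leq \eps$ is to put us in a perturbative regime of the $p$-harmonic (i.e.\ $\sigma \equiv 1$) equation where quantitative estimates survive. First I would record the monotonicity inequality of \cite{Brander:Kar:Salo:2014}: under $\sigma_1 \geq \sigma_2$ and $\Lambda_{\sigma_1} = \Lambda_{\sigma_2}$, for any admissible boundary value $f$ one gets a bound of the form
\[
\int_\Omega (\sigma_1 - \sigma_2)\,\abs{\nabla u_2}^{p}\,dx \;\leq\; 0,
\]
where $u_2$ is the $\sigma_2$-solution with data $f$ (with possibly a constant and the roles of $u_1,u_2$ arranged as in that paper). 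Since $\sigma_1 - \sigma_2 \geq 0$, this forces $(\sigma_1 - \sigma_2)\nabla u_2 = 0$ a.e., hence $\sigma_1 = \sigma_2$ on the set $\{\nabla u_2 \neq 0\}$. So everything reduces to showing that $\{\nabla u_2 = 0\}$ has empty interior for a suitable choice of $f$ — equivalently, to a \emph{quantitative unique continuation / non-degeneracy} statement for $u_2$.

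The key step, and the one where the smallness of $\sigma_2$ enters, is therefore: \emph{choose $f$ so that the corresponding $p$-harmonic function $v$ (solving $\mdiv(\abs{\nabla v}^{p-2}\nabla v)=0$, $v|_{\partial\Omega}=f$) has $\nabla v \neq 0$ on a dense open set, and then show $u_2$ inherits this property because it is $C^{1,\beta}$-close to $v$.} The natural choice is $f = x_j|_{\partial\Omega}$ for some coordinate $j$, for which $v(x) = x_j$ exactly (a linear function is $p$-harmonic for every $p$), so $\nabla v \equiv e_j \neq 0$ everywhere. Next I would invoke the $C^{1,\beta}(\overline\Omega)$ a priori estimates and stability for weighted $p$-Laplace equations with Hölder coefficients on $C^{1,\alpha}$ domains (this is where the hypotheses $1/M \leq \sigma_1 \leq M$, $\norm{\sigma_1}_{C^\alpha}\leq M$ are used — they guarantee a uniform modulus of continuity for $\nabla u_2$ and, via the continuous dependence of solutions on the coefficient, that $\norm{\nabla u_2 - \nabla v}_{L^\infty(\Omega)} = \norm{\nabla u_2 - e_j}_{L^\infty(\Omega)} \to 0$ as $\norm{\sigma_2 - 1}_{C^\alpha} \to 0$). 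Fixing $\eps$ small enough that this $L^\infty$ distance is $< 1/2$, we conclude $\abs{\nabla u_2} \geq 1/2 > 0$ throughout $\Omega$. Plugging back into the monotonicity inequality gives $\sigma_1 - \sigma_2 \leq 0$ a.e.\ in $\Omega$, and combined with $\sigma_1 \geq \sigma_2$ this yields $\sigma_1 = \sigma_2$.

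The main obstacle is the second step: establishing the requisite continuous dependence $\sigma_2 \mapsto \nabla u_2$ in $L^\infty$ (uniformly over the class of $\sigma_1$), together with the uniform $C^{1,\beta}$ bound. One must be careful that the $p$-Laplace operator is degenerate when $\nabla u$ is small, so the gradient estimates of Lieberman / Tolksdorf / DiBenedetto type and the corresponding comparison arguments have to be quoted in a form that is uniform in the coefficient within the prescribed bounds; the closeness of $\sigma_2$ to $1$ is what lets us compare $u_2$ to the explicit solution $v = x_j$ rather than to an unknown reference, circumventing the absence of a general unique continuation principle. A minor additional point is checking that $f = x_j$ is an admissible boundary value (it is, being in $W^{1-1/p,p}(\partial\Omega)$ or simply the trace of a smooth function) and that the weak definition of $\Lambda_\sigma$ and the monotonicity inequality from \cite{Brander:Kar:Salo:2014} apply verbatim under the current $C^\alpha$ regularity of the conductivities. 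Once the perturbation estimate is in hand, the argument closes immediately.
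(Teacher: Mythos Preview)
Your approach is essentially identical to the paper's: reduce via the monotonicity inequality (Lemma~\ref{needed_lemma}) to producing a $\sigma_2$-solution with nonvanishing gradient, take the linear function $u_0(x)=x_1$ as the reference $p$-harmonic function, and show $\norm{\nabla u_2 - e_1}_{L^\infty(\Omega)}$ is small by combining an $L^p$ stability estimate for the solution under coefficient perturbation (Lemma~\ref{higher_dim_lemma1}) with the uniform $C^{1,\beta}$ bounds of Lieberman and an interpolation (Lemmas~\ref{higher_dim_lemma2} and~\ref{inter}), yielding $\abs{\nabla u_2}\geq 1/2$ in $\Omega$. One minor slip: the $C^{1,\beta}$ control on $\nabla u_2$ comes from the bounds on $\sigma_2$ (which follow from $\norm{\sigma_2-1}_{C^\alpha}\leq\eps$), not from the hypotheses on $\sigma_1$; the latter are used only so that the monotonicity inequality applies and so that $\{\sigma_1>\sigma_2\}$ is open.
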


Both of the above theorems are based on the monotonicity inequality, Lemma \ref{needed_lemma}, and the existence of solutions whose gradient is nonvanishing in suitable sets. More precisely, Lemma \ref{needed_lemma} implies that for any $u \in W^{1,p}(\Omega)$ solving $\mdiv(\sigma_2 \abs{\nabla u}^{p-2} \nabla u) = 0$ in $\Omega$, one has 
\begin{align*}
(p-1) & \int_{\Omega} \frac{\sigma_2}{\sigma_1^{1/(p-1)}} \sulut{\sigma_1^{\frac{1}{p-1}} - \sigma_2^{\frac{1}{p-1}}} \abs{\nabla u}^{p} \,dx \leq \langle (\Lambda_{\sigma_1} - \Lambda_{\sigma_2})(u|_{\partial \Omega}), u|_{\partial \Omega} \rangle.
\end{align*}
Thus if $\sigma_1, \sigma_2 \in C_+(\overline{\Omega})$ satisfy $\sigma_1 \geq \sigma_2$ and $\Lambda_{\sigma_1} = \Lambda_{\sigma_2}$, it follows that 
\[
\abs{\nabla u}^p = 0 \text{ a.e.\ in $E$}
\]
for any solution $u$, where $E = \{Êx \in \Omega \,;\, \sigma_1(x) > \sigma_2(x) \}$. We would like to show that $\sigma_1 = \sigma_2$, or that $E$ is empty. But if $E$ would be nonempty, then all solutions $u$ would satisfy $\nabla u = 0$ in the open set $E$. It is thus enough to exhibit one solution $u$ with $\nabla u \neq 0$ somewhere in $E$.

If $\sigma \in C^{\alpha}_+(\overline{\Omega})$ for some $\alpha > 0$, we define the set of weak solutions 
\[
S_{\sigma} = \{ u \in W^{1,p}(\Omega) \,;\, \text{$\mathrm{div}(\sigma \abs{\nabla u}^{p-2} \nabla u) = 0$ in $\Omega$} \}.
\]
Each $u \in S_{\sigma}$ is locally $C^1$ (see e.g.\ \cite{Lieberman:1988}), and we let $\mathcal{C}(u)$ be the critical set of $u$, 
\[
\mathcal{C}(u) = \{ x \in \Omega \,;\, \abs{\nabla u}(x) = 0 \}.
\]
The study of critical sets of solutions is of independent interest, and there are a number of results in the case $p=2$ and also in the two-dimensional case when $p \neq 2$ (see \cite{Alessandrini:Sigalotti:2001, Cheeger:Naber:Valtorta:2015} and references therein). The following question is relevant in our context, and further answers to this question would imply improvements in the above theorems when $n \geq 3$:

\begin{question}
Let $\Omega \subset \mR^n$ be a bounded connected open set and let $1 < p < \infty$. Given $\sigma \in C^{\alpha}_+(\overline{\Omega})$, consider the following statements:
\begin{enumerate}
\item[(a)]
There is $u \in S_{\sigma}$ such that $\mathcal{C}(u)$ has Lebesgue measure zero.
\item[(b)]
For any set $E \subset \Omega$ of positive Lebesgue measure there is $u \in S_{\sigma}$ such that $\nabla u|_E$ is not zero a.e.\ in $E$.
\item[(c)]
For any open set $U \subset \Omega$ there is $u \in S_{\sigma}$ such that $\nabla u|_U$ is not zero a.e.\ in $U$.
\end{enumerate}
For which $\sigma \in C^{\alpha}_+(\overline{\Omega})$ does (a), (b), or (c) hold?
\end{question}

Clearly (a) $\implies$ (b) $\implies$ (c). We note that (a) holds for constant conductivities, or for conductivities only depending on $n-1$ variables (in these cases there is a linear function which is a solution with nonvanishing gradient). Also, (a) holds in two dimensions at least for Lipschitz $\sigma$, since $\mathcal{C}(u)$ for nonconstant $u \in S_{\sigma}$ is the set of zeros of a quasiregular map and hence has measure zero (see \cite{Alessandrini:Sigalotti:2001} or Appendix \ref{sec_appendix}). Finally, the weak unique continuation principle would imply (c) since then $\mathcal{C}(u)$ has empty interior for any nonconstant $u \in S_{\sigma}$.

We remark that in the linear case $p=2$, uniqueness results for the inverse problem even without monotonicity assumptions have been known for a long time (see the survey \cite{Uhlmann:2014}). Monotonicity arguments go back to \cite{Alessandrini1989, KangSeoSheen1997, AlessandriniRosset1998, Ikehata1998, AlessandriniRossetSeo2000, Ide:Isozaki:Nakata:Siltanen:Uhlmann:2007}, and recently they have been combined with the method of localized potentials introduced in \cite{Gebauer:2008} to obtain reconstruction algorithms \cite{Harrach:2012, Harrach:Ullrich:2013}. However, the unique continuation principle and the Runge approximation property play a role in these arguments, and these facts are not known for $p$-Laplace type equations in dimensions $n \geq 3$.

This paper is organized as follows. Section \ref{sec_intro} is the introduction. In Section \ref{sec_twod} we establish the monotonicity inequality and the two-dimensional result, Theorem \ref{intro_thm1}. Section \ref{INTERIOR} proves Theorem \ref{intro_thm2} which is valid in any dimension, by a perturbation argument around constant conductivities. We will do the proofs for the slightly more general equation 
\[
\text{div}(\sigma \abs{A \nabla u \cdot \nabla u}^{(p-2)/2} A \nabla u) = 0 \ \ \text{ in } \Omega
\]
where $\sigma$ is a positive scalar function and $A$ is a positive definite matrix function. Finally, Appendix \ref{sec_appendix} contains some interpolation and unique continuation results required in the proofs.

\subsection*{Acknowledgements}

C.Y.\ Guo was supported by the Magnus Ehrnrooth foundation. M.\ Kar\ and M.\ Salo\ were partly supported by an ERC Starting Grant (grant agreement no 307023) and by the Academy of Finland through the Centre of Excellence in Inverse Problems Research. M.\ Salo\ was also supported by CNRS. C.Y.\ Guo would like to thank the ICMAT program on Analysis and Geometry on Metric Spaces in 2015, and M.\ Kar\ and M.\ Salo\  would like to thank the Institut Henri Poincar\'e Program on Inverse Problems in 2015, where part of this work was carried out. M.\ Salo\ would like to thank Niko Marola for helpful discussions.

\section{Interior uniqueness in the plane} \label{sec_twod}

Given a bounded open set $\Omega\subset\mathbb{R}^2$ and a conductivity $\sigma\in L_{+}^{\infty}(\Omega)$, we consider the Dirichlet problem for the following $p$-Laplace type equation where $1<p<\infty$, 
\begin{equation}  \label{p-lapPN0}
\begin{cases}
\text{div}(\sigma\abs{A\nabla u\cdot \nabla u}^{(p-2)/2} A\nabla u) = 0 \ \text{in}\ \Omega, \\
u = f \ \text{on}\ \partial\Omega,
\end{cases}
\end{equation}
where $A \in L^{\infty}_+(\Omega, \mR^{n \times n}) $, meaning that $A = (a_{jk})$ where $a_{jk} \in L^{\infty}(\Omega)$, $a_{jk} = a_{kj}$, and for some $c_0 > 0$ one has $\sum_{j,k=1}^n a_{jk}(x) \xi_j \xi_k \geq c_0 \abs{\xi}^2$ for a.e.\ $x \in \Omega$ and for all $\xi \in \mR^n$.

The problem \eqref{p-lapPN0} is well posed in $W^{1,p}(\Omega)$ for a given Dirichlet boundary data $f\in W^{1,p}(\Omega)$ (the boundary values are understood so that $u-f \in W_0^{1,p}(\Omega)$), see for instance~\cite{D'Onofrio:Iwaniec:2005,Heinonen:Kilpelainen:Martio:1993,Lindqvist:2006,Salo:Zhong:2012}. The solution $u$ minimizes the $p$-Dirichlet energy 
\[
E_p(v) = \int_{\Omega}\sigma|A\nabla v\cdot \nabla v|^{p/2}dx 
\]
over all $v\in W^{1,p}(\Omega)$ with $v-f \in W^{1,p}_0(\Omega)$.

We formally define the nonlinear DN map by
\[
\Lambda_{\sigma}: f \mapsto \sigma |A\nabla u\cdot \nabla u|^{(p-2)/2}A\nabla u\cdot \nu|_{\partial\Omega},
\]
where $u\in W^{1,p}(\Omega)$ satisfies \eqref{p-lapPN0}. More precisely, $\Lambda_{\sigma}$ is a nonlinear map $X \rightarrow X'$ where $X$ is the abstract trace space $X = W^{1,p}(\Omega)/W_0^{1,p}(\Omega)$ and $X'$ denotes the dual of $X$, and $\Lambda_{\sigma}$ is defined by the relation 
\begin{equation} \label{dnmap_weak_definition}
\langle\Lambda_{\sigma}(f), g\rangle = \int_{\Omega} \sigma \abs{A\nabla u\cdot \nabla u}^{(p-2)/2} A\nabla u \cdot \nabla v \,dx, \qquad f, g \in X,
\end{equation}
where $u \in W^{1,p}(\Omega)$ is the unique solution of $\text{div}(\sigma \abs{A\nabla u\cdot \nabla u}^{(p-2)/2} A\nabla u) = 0$ in $\Omega$ with $u|_{\partial \Omega} = f$, and $v$ is any function in $W^{1,p}(\Omega)$ with $v|_{\partial \Omega} = g$. Here $\langle\,\cdot\, , \,\cdot\, \rangle$ is the duality between $X'$ and $X$. If $\partial \Omega$ has Lipschitz boundary, the trace space $X$ can be identified with the Besov space $B^{1-1/p}_{pp}(\partial \Omega)$. Physically $\Lambda_\sigma (f)$ is the current flux density caused by the boundary potential~$f$. See \cite[Appendix]{Salo:Zhong:2012} and \cite{Hauer:2014} for further properties of the DN map.

The following is the main result of this section.

\begin{thm}\label{main_theorem}
Let $\Omega \subset \mR^2$ be a bounded open set, let $A \in W^{1,\infty}(\Omega, \mR^{n \times n})$ be a symmetric positive definite matrix function, and let $\sigma_1, \sigma_2 \in W^{1,\infty}_+(\Omega)$ be two conductivities such that $\sigma_1\geq \sigma_2$ in $\Omega$. If $\Lambda_{\sigma_1} = \Lambda_{\sigma_2}$, then $\sigma_1 = \sigma_2$ in $\Omega$.
\end{thm}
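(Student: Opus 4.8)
The plan is to combine the monotonicity inequality stated in the introduction (Lemma \ref{needed_lemma}) with a two-dimensional unique continuation argument. First, I would reduce to showing that the set $E = \{ x \in \Omega \,;\, \sigma_1(x) > \sigma_2(x) \}$ is empty. Since $E$ is open (by continuity of $\sigma_1 - \sigma_2$) and $\sigma_1 \geq \sigma_2$, the monotonicity inequality applied with the assumption $\Lambda_{\sigma_1} = \Lambda_{\sigma_2}$ gives, for every $u \in S_{\sigma_2}$,
\[
(p-1) \int_{\Omega} \frac{\sigma_2}{\sigma_1^{1/(p-1)}} \sulut{\sigma_1^{\frac{1}{p-1}} - \sigma_2^{\frac{1}{p-1}}} \abs{A \nabla u \cdot \nabla u}^{p/2} \,dx \leq 0,
\]
and since the integrand is nonnegative and strictly positive wherever $\sigma_1 > \sigma_2$ (using ellipticity of $A$ to control $|A\nabla u \cdot \nabla u|$ from below by $c_0 |\nabla u|^2$), this forces $\nabla u = 0$ a.e.\ in $E$ for every solution $u$.

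Next I would derive a contradiction assuming $E \neq \emptyset$. Pick a point $x_0 \in E$ and a small ball $B \subset E$ around it. I would produce a solution $u \in S_{\sigma_2}$ which is nonconstant — for instance by solving the Dirichlet problem \eqref{p-lapPN0} with $\sigma = \sigma_2$ and boundary data $f$ equal to the trace of a nonconstant linear (or affine) function, so that $u$ is not constant. (Well-posedness in $W^{1,p}(\Omega)$ is already recalled in the excerpt, and $u$ is locally $C^1$ by \cite{Lieberman:1988}.) By the previous step $\nabla u = 0$ a.e.\ in $B$, hence $u$ is constant on $B$ (using that $u$ is continuous and $B$ connected). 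Now invoke the unique continuation principle for $p$-Laplace type equations in the plane \cite{Alessandrini:1987, bi84, Manfredi:1988, Alessandrini:Sigalotti:2001}: a solution of $\mdiv(\sigma_2 \abs{A \nabla u \cdot \nabla u}^{(p-2)/2} A \nabla u) = 0$ in $\Omega$ (with $\sigma_2 \in W^{1,\infty}_+$, $A \in W^{1,\infty}$ symmetric positive definite) that vanishes to infinite order — or more simply, is constant on an open subset — must be constant on the connected component of $\Omega$ containing that subset. This contradicts the nonconstancy of $u$ on the component of $\Omega$ meeting $B$. If $\Omega$ is disconnected, run the argument componentwise: on a component not meeting $E$ there is nothing to prove, and on a component meeting $E$ one gets the same contradiction. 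Therefore $E = \emptyset$, i.e.\ $\sigma_1 = \sigma_2$ in $\Omega$.

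The main obstacle — and the step requiring the most care — is the precise invocation of unique continuation: the cited results are stated for various forms of two-dimensional quasilinear/linear elliptic equations, and one must check that the weighted $A$-coefficient $p$-Laplace operator with $W^{1,\infty}$ coefficients falls within their scope (the standard route is that $w = \partial_1 u + i \partial_2 u$, or an appropriate complex combination adapted to $A$, satisfies a Beltrami-type equation making it quasiregular, so its zero set is discrete unless $u$ is constant; see \cite{Alessandrini:Sigalotti:2001} or Appendix \ref{sec_appendix}). A secondary technical point is verifying that the monotonicity inequality of Lemma \ref{needed_lemma}, stated in the introduction for scalar $\sigma$, holds in the $A$-coefficient generality used here — but this is exactly what Lemma \ref{needed_lemma} is set up to provide, and the only structural input needed is the pointwise convexity inequality for $t \mapsto |t|^{p/2}$ composed with the quadratic form $A\xi\cdot\xi$. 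Everything else (openness of $E$, local constancy from vanishing gradient, well-posedness, local $C^1$ regularity) is routine given the results already quoted.
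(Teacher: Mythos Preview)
Your proposal is correct and follows essentially the same route as the paper: contradiction via the monotonicity inequality (Lemma \ref{needed_lemma}) to force $\nabla u = 0$ on an open set, then the two-dimensional unique continuation principle (Theorem \ref{thm:UCP 2d}, from \cite{Alessandrini:Sigalotti:2001}) to contradict nonconstancy of a chosen solution. Your treatment is in fact slightly more careful than the paper's in that you explicitly handle disconnected $\Omega$ componentwise, whereas the paper tacitly applies the UCP (stated for a domain) to all of $\Omega$.
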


The proof is based on a monotonicity inequality and the unique continuation principle for solutions of \eqref{p-lapPN0}. Let us first consider the monotonicity inequality, which holds true in any dimension $n\geq 2$. In the linear case $p=2$, the following inequality is well known (see references in the introduction). For $p \neq 2$ this inequality was proved in \cite{{Brander:Kar:Salo:2014}} in the case $A = I$. The proof for general $A$ is almost identical, but we give it here for completeness. 

\begin{lemma}\label{needed_lemma}
Let $\Omega \subset \mR^n$ be a bounded open set where $n \geq 2$, let $\sigma_1, \sigma_2 \in L^{\infty}_+(\Omega)$, let $A \in C(\overline{\Omega}, \mR^{n \times n})$ be a symmetric positive definite matrix function, and let $1 < p < \infty$. If $f \in W^{1,p}(\Omega)$, then 
\begin{align*}
(p-1) & \int_{\Omega} \frac{\sigma_2}{\sigma_1^{1/(p-1)}} \sulut{\sigma_1^{\frac{1}{p-1}} - \sigma_2^{\frac{1}{p-1}}} \abs{A\nabla u_2\cdot \nabla u_2}^{p/2} \,dx \\
& \leq \langle(\Lambda_{\sigma_1} - \Lambda_{\sigma_2})f, f\rangle 
 \leq \int_{\Omega} (\sigma_1 - \sigma_2) \abs{A\nabla u_2\cdot \nabla u_2}^{p/2} \,dx,
\end{align*}
where $u_2 \in W^{1,p}(\Omega)$ solves $\mdiv(\sigma_2 \abs{A\nabla u_2\cdot \nabla u_2}^{(p-2)/2} A\nabla u_2) = 0$ in $\Omega$ with $u_2|_{\partial \Omega} = f$.
\end{lemma}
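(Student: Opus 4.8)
The plan is to bracket $\langle(\Lambda_{\sigma_1}-\Lambda_{\sigma_2})f,f\rangle$ by comparing the $p$-Dirichlet energies of the two solutions, using two variational principles: the minimum energy principle for the upper bound and the dual (complementary energy) principle for the lower bound. First I would record that, taking the test function $v=u$ in \eqref{dnmap_weak_definition} and using that $A$ is positive definite, one has $\langle\Lambda_{\sigma_i}f,f\rangle = E_i(u_i)$, where $E_i(v):=\int_\Omega\sigma_i\,(A\nabla v\cdot\nabla v)^{p/2}\,dx$ and $u_i\in W^{1,p}(\Omega)$ solves the $\sigma_i$-equation with $u_i|_{\partial\Omega}=f$. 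Hence $\langle(\Lambda_{\sigma_1}-\Lambda_{\sigma_2})f,f\rangle = E_1(u_1)-E_2(u_2)$, and the two inequalities of the lemma are an upper and a lower bound for this difference.

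The upper bound is immediate: since $u_1$ minimizes $E_1$ over all $v$ with $v-f\in W^{1,p}_0(\Omega)$ and $u_2$ is admissible, $E_1(u_1)\le E_1(u_2)$, so $E_1(u_1)-E_2(u_2)\le\int_\Omega(\sigma_1-\sigma_2)(A\nabla u_2\cdot\nabla u_2)^{p/2}\,dx$.

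For the lower bound I would use the pointwise Fenchel--Young inequality for the convex function $g(\xi)=(A\xi\cdot\xi)^{p/2}$. Writing $p'=p/(p-1)$, one computes $g^*(\eta)=c_p\,(A^{-1}\eta\cdot\eta)^{p'/2}$ with $c_p=(p-1)p^{-p'}$; rescaling by $\sigma_1>0$ yields, for every vector field $W$ and a.e.\ $x$,
\[
\sigma_1\,(A\nabla u_1\cdot\nabla u_1)^{p/2}+c_p\,\sigma_1^{-1/(p-1)}\,(A^{-1}W\cdot W)^{p'/2}\ \ge\ \nabla u_1\cdot W.
\]
Integrating over $\Omega$ and taking $W$ divergence free gives $E_1(u_1)\ge\int_\Omega\nabla f\cdot W\,dx-c_p\int_\Omega\sigma_1^{-1/(p-1)}(A^{-1}W\cdot W)^{p'/2}\,dx$, because $u_1-f\in W^{1,p}_0(\Omega)$. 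The decisive step is to take $W=V_2:=p\,\sigma_2\,(A\nabla u_2\cdot\nabla u_2)^{(p-2)/2}A\nabla u_2$, i.e.\ $p$ times the $\sigma_2$-flux of $u_2$: this is divergence free precisely because $u_2$ solves its equation, it lies in $L^{p'}(\Omega)$, and $(A^{-1}V_2\cdot V_2)^{p'/2}=p^{p'}\sigma_2^{p'}(A\nabla u_2\cdot\nabla u_2)^{p/2}\in L^1(\Omega)$, so the integrations by parts are legitimate at the $W^{1,p}$ level. With this choice $\int_\Omega\nabla f\cdot V_2\,dx=\int_\Omega\nabla u_2\cdot V_2\,dx=p\,E_2(u_2)$, while (using $(p-1)p'=p$ and $c_p\,p^{p'}=p-1$) the conjugate term equals $(p-1)\int_\Omega\sigma_1^{-1/(p-1)}\sigma_2^{p'}(A\nabla u_2\cdot\nabla u_2)^{p/2}\,dx$. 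This gives $E_1(u_1)\ge p\,E_2(u_2)-(p-1)\int_\Omega\sigma_1^{-1/(p-1)}\sigma_2^{p'}(A\nabla u_2\cdot\nabla u_2)^{p/2}\,dx$; subtracting $E_2(u_2)$ and writing $\sigma_2^{p'}=\sigma_2\,\sigma_2^{1/(p-1)}$ turns the right-hand side into $(p-1)\int_\Omega\frac{\sigma_2}{\sigma_1^{1/(p-1)}}\bigl(\sigma_1^{1/(p-1)}-\sigma_2^{1/(p-1)}\bigr)(A\nabla u_2\cdot\nabla u_2)^{p/2}\,dx$, which is the claimed lower bound.

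The routine part is everything except the lower bound; the genuine points there are (i) recognizing that the right divergence-free competitor in the dual principle is the $\sigma_2$-flux of $u_2$, which is what makes the boundary term collapse to $p\,E_2(u_2)$, and (ii) the bookkeeping of the exponent $p'$ and of the constant $c_p$ so that the pieces recombine into exactly the stated coefficient. As a consistency check, for $p=2$ this reduces to the classical monotonicity identity $\langle(\Lambda_{\sigma_1}-\Lambda_{\sigma_2})f,f\rangle=\int_\Omega(\sigma_1-\sigma_2)\,A\nabla u_2\cdot\nabla u_2\,dx-\int_\Omega\sigma_1\,A\nabla(u_1-u_2)\cdot\nabla(u_1-u_2)\,dx$, after which the lower bound follows by Cauchy--Schwarz and the equations for $u_1$ and $u_2$.
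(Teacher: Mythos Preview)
Your proof is correct. Both the upper and lower bounds are established correctly, and the computations with $g^*$, the choice $W=V_2$, and the exponent bookkeeping all check out.

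The route you take for the lower bound is, however, organized differently from the paper's. The paper factors $A=B^\top B$ (invoking a separate continuity lemma for this square root), rewrites $\langle(\Lambda_{\sigma_1}-\Lambda_{\sigma_2})f,f\rangle$ so that a cross term $\sigma_2|B\nabla u_2|^{p-2}B\nabla u_2\cdot B\nabla u_1$ appears with a free parameter $\beta>0$, bounds that cross term by the scalar Young inequality $|ab|\le |a|^p/p+|b|^{p'}/p'$, and finally optimizes in $\beta$, the minimum being attained at $\beta=p-1$. Your argument instead packages the same Young inequality as the Fenchel--Young inequality for $\xi\mapsto (A\xi\cdot\xi)^{p/2}$ and reads the resulting bound as the complementary (dual) energy principle for $E_1$, with the $\sigma_2$-flux of $u_2$ inserted as the divergence-free competitor. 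This bypasses the matrix square root entirely (the conjugate lives naturally on $A^{-1}$), and it eliminates the optimization step: choosing the flux $V_2$ is already the optimal choice, which is why the constant $(p-1)$ drops out directly from $c_p\,p^{p'}=p-1$ without tuning a parameter. Conversely, the paper's approach is slightly more elementary in that it never needs to compute a Legendre transform or invoke any convex-analytic language; it is a bare-hands manipulation of the weak forms. The two arguments are equivalent at the level of inequalities but your presentation is a bit more structural.

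One minor remark: in your closing $p=2$ consistency check, once you have the exact identity you wrote, the lower bound follows simply by discarding the nonnegative term $\int_\Omega\sigma_1\,A\nabla(u_1-u_2)\cdot\nabla(u_1-u_2)\,dx$; no further Cauchy--Schwarz step is needed.
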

We emphasize that if $\sigma_1 \geq \sigma_2$, then all the terms in the inequality are nonnegative, while if $\sigma_1 \leq \sigma_2$, then they are nonpositive.

\begin{proof}
Let $u_1, u_2 \in W^{1,p}(\Omega)$ be the solutions of the Dirichlet problem for the $p$-Laplace type equation,
\begin{equation}  \label{p-lapp}
	\begin{cases}
		\mdiv(\sigma \abs{A\nabla u\cdot \nabla u}^{(p-2)/2} A\nabla u) = 0 \ \text{in}\ \Omega, \\
		u = f \ \text{on}\ \partial\Omega,
	\end{cases}
\end{equation} 
corresponding to the conductivities $\sigma = \sigma_1$ and $\sigma = \sigma_2$ respectively.

Note that the solution of \eqref{p-lapp} can be characterized as the unique minimizer of the energy functional
\[
E_p(v) = \int_{\Omega}\sigma|A\nabla v\cdot \nabla v|^{p/2}dx
\]
over the set $\{v\in W^{1,p}(\Omega) ; v-f \in W_{0}^{1,p}(\Omega)\}$ (see \cite[Appendix]{Salo:Zhong:2012}).
Therefore, we obtain the following one sided inequality for the difference of DN maps:
\begin{align*}
	\langle(\Lambda_{\sigma_1} - \Lambda_{\sigma_2})f, f\rangle
	& =\int_{\Omega}\sigma_1|A\nabla u_1\cdot \nabla u_1|^{p/2}dx - \int_{\Omega}\sigma_2|A\nabla u_2\cdot \nabla u_2|^{p/2}dx \\
	& \leq \int_{\Omega}(\sigma_1-\sigma_2)|A\nabla u_2\cdot \nabla u_2|^{p/2}dx.
\end{align*}

Since $A$ is symmetric positive definite, $A=B^{\top}B$ for some symmetric matrix function $B \in C(\overline{\Omega}, \mR^{n \times n})$. The existence of such matrix $B$ is due to the Lemma \ref{existence_B} in the Appendix. To obtain the other side of the inequality, note that 
\[
A\nabla u_1\cdot \nabla u_2=B\nabla u_1\cdot B\nabla u_2.
\]
Let $\beta > 0$ be a real number (whose value will be fixed later). Using \eqref{dnmap_weak_definition} several times together with the fact that $u_1|_{\partial \Omega} = u_2|_{\partial \Omega}$, we may rewrite the difference of DN maps as follows:
\begin{align*}
	\langle(\Lambda_{\sigma_1} - \Lambda_{\sigma_2})f, f\rangle
	&= \int_{\Omega}\beta\sigma_2|A\nabla u_2\cdot \nabla u_2|^{p/2}  \\
	&\qquad - \sulut{(1 + \beta)\sigma_2\abs{A\nabla u_2\cdot \nabla u_2}^{\frac{p-2}{2}}A\nabla u_2 \cdot \nabla u_2  - \sigma_1 |A\nabla u_1\cdot \nabla u_1|^{p/2}}  \der x \\
	&= \int_{\Omega}\beta\sigma_2|B\nabla u_2|^p - \left((1+\beta)\sigma_2|B\nabla u_2|^{p-2}B\nabla u_2\cdot B\nabla u_1 - \sigma_1|B\nabla u_1|^p\right)\der x.
\end{align*}
Now, by applying Young's inequality $\abs{ab} \leq \frac{\abs{a}^p}{p} + \frac{\abs{b}^{p'}}{p'}$ where $1/p+1/{p'} = 1$, we have 
\begin{align*}
	&(1+\beta)\sigma_2|B\nabla u_2|^{p-2}B\nabla u_2\cdot B\nabla u_1 - \sigma_1|B\nabla u_1|^p \\
	& = \frac{1+\beta}{p^{1/p}}\frac{\sigma_2}{\sigma_{1}^{1/p}}|B\nabla u_2|^{p-2}B\nabla u_2 \cdot  p^{1/p}\sigma_{1}^{1/p}B\nabla u_1 - \sigma_1|B\nabla u_1|^p \\
	& \leq \frac{1}{p'}\left(\frac{1+\beta}{p^{1/p}}\right)^{p'}\frac{\sigma_{2}^{p'}}{\sigma_{1}^{1/(p-1)}}|B\nabla u_2|^p + \sigma_1|B\nabla u_1|^p - \sigma_1|B\nabla u_1|^p \\
	& = \frac{1}{p'}\left(1+\beta\right)^{p'}\frac{1}{p^{1/(p-1)}}\frac{\sigma_{2}^{p'}}{\sigma_{1}^{1/(p-1)}}|B\nabla u_2|^p.
\end{align*}

Therefore
\begin{align}
	 \langle(\Lambda_{\sigma_1} - \Lambda_{\sigma_2})f, f\rangle
	& \geq \int_{\Omega}\left(\beta\sigma_2 - \frac{1}{p'}\left(1+\beta\right)^{p'}\frac{1}{p^{1/(p-1)}}\frac{\sigma_{2}^{p'}}{\sigma_{1}^{1/(p-1)}}\right)|B\nabla u_2|^pdx \nonumber \\
	& = \int_{\Omega}\frac{\beta\sigma_2}{\sigma_{1}^{1/(p-1)}}\left(\sigma_{1}^{\frac{1}{p-1}} - \frac{1}{p'}\frac{(1+\beta)^{p'}}{\beta}\left(\frac{1}{p}\right)^{\frac{1}{p-1}}\sigma_{2}^{\frac{1}{p-1}}\right)|B\nabla u_2|^pdx. \label{DNkey}
\end{align}

Note that $\frac{(1+\beta)^{p'}}{\beta} \rightarrow \infty$ as $\beta\rightarrow \infty$ or $\beta\rightarrow 0$. 
So, the function $\beta \rightarrow \frac{(1+\beta)^{p'}}{\beta}$ attains its minimum at $\beta = p-1$.
Thus, we choose $\beta = p-1$ so that from \eqref{DNkey}, we obtain the required inequality. 
\end{proof}

Next we consider the unique continuation principle for solutions of the $p$-Laplace type equation
\begin{align}\label{eq:UCP equation}
\mdiv(\sigma\abs{A\nabla u\cdot \nabla u}^{(p-2)/2} A\nabla u) = 0.
\end{align}
The case when $\sigma$ is constant and $A=I$ is well-known due to the work of Alessandrini \cite{Alessandrini:1987}, Bojarski-Iwaniec \cite{bi84} and Manfredi \cite{Manfredi:1988}, namely, if $u$ is a solution of the $p$-Laplace equation
\begin{align*}
\mdiv(\abs{\nabla u\cdot \nabla u}^{(p-2)/2} \nabla u) = 0
\end{align*}
in a planar domain $\Omega\subset \R^2$ and if $u$ is constant in an open subset of $\Omega$, then it is actually constant in the whole domain $\Omega$. The proof of Alessandrini involves a linear equation for $\log\,\abs{\nabla u}$, whereas the proof of Bojarski-Iwaniec (see also \cite[Chapter 16]{aim09} for a presentation) uses that complex gradients of solutions of the $p$-Laplace equation are quasiregular mappings, and that non-constant quasiregular mappings are discrete and open.

The unique continuation principle holds for solutions of~\eqref{eq:UCP equation} as well at least when the coefficients are Lipschitz, see \cite[Proposition 3.3]{Alessandrini:Sigalotti:2001}.

\begin{thm}\label{thm:UCP 2d}
If $\Omega$ is a domain in $\mathbb{R}^2$, $A \in W^{1,\infty}(\Omega, \mR^{n \times n})$ is a symmetric positive definite matrix function and $\sigma \in W^{1,\infty}_+(\Omega)$, and if $u \in W^{1,p}_{\mathrm{loc}}(\Omega)$ is a solution of~\eqref{eq:UCP equation} which is constant in an open nonempty subset of $\Omega$, then $u$ is identically constant in $\Omega$.
\end{thm}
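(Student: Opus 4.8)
The plan is to prove that the complex gradient of a nonconstant solution is a nonconstant quasiregular map of $\Omega$ into $\mathbb{C}$, and then to invoke the identity principle for such maps: a nonconstant planar quasiregular map is discrete and open, so its zero set — which contains the critical set of $u$ — has empty interior. Since $\Omega$ is a domain, hence connected, once this is in place it suffices to observe that a $u$ which is constant on a nonempty open set cannot have a critical set with empty interior, so it must be constant throughout. As a preliminary, the interior regularity theory of Lieberman for $p$-Laplace type equations with (Hölder or) Lipschitz coefficients gives $u \in C^{1,\alpha}_{\mathrm{loc}}(\Omega)$, so that $\nabla u$ is continuous and the critical set $\mathcal C(u) = \{\nabla u = 0\}$ is closed; here the $W^{1,\infty}$ hypothesis on $A$ and $\sigma$ already enters.

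The heart of the argument is exhibiting the quasiregular map. Away from $\mathcal C(u)$, equation \eqref{eq:UCP equation} is uniformly elliptic: linearizing its flux $\sigma\abs{A\nabla u\cdot\nabla u}^{(p-2)/2}A\nabla u$ at a noncritical gradient $q$ yields the principal matrix $P(x,q) = A + (p-2)\,(Aq)(Aq)^{\top}/(Aq\cdot q)$, and conjugating by $A(x)^{1/2}$ turns $P(x,q)$ into a matrix with eigenvalues $1$ and $p-1$; hence $P$ is elliptic with constants depending only on $p$ and the condition number of $A$, and for $A = I$ and $\sigma$ constant the associated Beltrami coefficient has modulus exactly $\abs{p-2}/p < 1$ for every $1 < p < \infty$. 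Differentiating the equation (legitimate since $u \in W^{2,2}_{\mathrm{loc}}$ off $\mathcal C(u)$), or equivalently introducing locally the stream function $v$ with $\nabla v = \sigma\abs{A\nabla u\cdot\nabla u}^{(p-2)/2} J A\nabla u$ (where $J$ is the rotation by $\pi/2$, so that $\mdiv(\sigma\abs{A\nabla u\cdot\nabla u}^{(p-2)/2}A\nabla u) = 0$ is exactly the integrability condition) and eliminating $v$ from the resulting coupled system, one finds that the complex gradient $f$ of $u$ — after, if one prefers, an $A$-isothermal change of coordinates that reduces the equation to $\mdiv(\tilde\sigma\abs{\nabla\tilde u}^{p-2}\nabla\tilde u) = 0$ with a new positive Hölder weight $\tilde\sigma$, $f = \partial_1\tilde u - i\partial_2\tilde u$ — solves a Beltrami-type system $\partial_{\bar z} f = \mu\,\partial_z f + \nu\,\overline{\partial_z f} + a f + b\bar f$ with $\norm{\mu}_{L^\infty} + \norm{\nu}_{L^\infty} \le k < 1$ and $a,b \in L^\infty$ absorbing the contributions of $\nabla A$ and $\nabla\sigma$, together with $f \in W^{1,2}_{\mathrm{loc}}$. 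By the similarity principle of Bers and Vekua, $f = e^{s} h$ with $s$ bounded and continuous and $h$ quasiregular, so $\mathcal C(u) = f^{-1}(0) = h^{-1}(0)$. This is precisely \cite[Proposition 3.3]{Alessandrini:Sigalotti:2001}, and a self-contained account is given in Appendix \ref{sec_appendix}.

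To conclude: if $u$ is nonconstant, then $h$ is nonconstant, and by Stoilow factorization $h = \Phi \circ \psi$ with $\Phi$ holomorphic and $\psi$ a quasiconformal homeomorphism, so $\mathcal C(u) = h^{-1}(0) = \psi^{-1}(\Phi^{-1}(0))$ is discrete and in particular has empty interior; but if $u$ equals a constant on a nonempty open set $U \subset \Omega$ then $U \subset \mathcal C(u)$, a contradiction. Hence $u$ is constant on $\Omega$. The \emph{main obstacle} is the middle step: because \eqref{eq:UCP equation} degenerates exactly on $\mathcal C(u)$, the Beltrami system is available a priori only off the critical set, so one must verify that $f$ has the Sobolev regularity $W^{1,2}_{\mathrm{loc}}$ up to and across $\mathcal C(u)$ (making the degeneracy removable) while simultaneously controlling the variable coefficients $A$ and $\sigma$ — exactly the technical content of \cite{Alessandrini:Sigalotti:2001}, and classically of \cite{Alessandrini:1987,bi84,Manfredi:1988} in the case $A = I$ with constant $\sigma$ — which I would quote rather than reprove.
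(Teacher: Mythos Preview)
Your proposal is correct and follows essentially the same strategy as the paper: reduce to a first-order Beltrami-type system for (a version of) the complex gradient, invoke the similarity/Stoilow representation to exhibit quasiregularity, and conclude via discreteness of the zero set; both you and the paper ultimately cite \cite{Alessandrini:Sigalotti:2001} for the general $A$ case. One technical point worth flagging: the paper's appendix (following Bojarski--Iwaniec) works with the nonlinear power $F=\sigma^{p/2}\abs{\nabla u}^{(p-2)/2}\nabla u$ rather than the bare complex gradient $f$, precisely because it is $F$, not $f$, that lies in $W^{1,2}_{\mathrm{loc}}$ across the critical set when $p\ge 2$; your assertion ``together with $f\in W^{1,2}_{\mathrm{loc}}$'' is not generally true for $p>2$, so if you carry out the argument yourself you should either pass to $F$ or follow the different route in \cite{Alessandrini:Sigalotti:2001}, as you in effect acknowledge when you say you would quote rather than reprove.
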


In the appendix, for possible later purposes we sketch an alternative proof of Theorem~\ref{thm:UCP 2d} for $A=I$ and $\sigma$ Lipschitz continuous, based on the theory of Beltrami equations, following the approach introduced by Bojarski and Iwaniec~\cite{bi84}.

\begin{proof}[Proof of Theorem \ref{main_theorem}]
We argue by contradiction and suppose that $\sigma_1(x_0) > \sigma_2(x_0)$ for some $x_0 \in \overline{\Omega}$. Since $\sigma_1$ and $\sigma_2$ are continuous, there exists some open ball $D \subset \Omega$ so that $\sigma_1 - \sigma_2 > 0$ in $D$.

Let $u_2\in W^{1,p}(\Omega)$ be a solution of $\mdiv(\sigma_2 \abs{A\nabla u\cdot \nabla u}^{(p-2)/2} A\nabla u) = 0$ in $\Omega$, with non-constant Dirichlet data $f \in W^{1,p}(\Omega)$ (i.e.\ $f-C \notin W^{1,p}_0(\Omega)$ for any constant $C$).
Using the left hand side of the monotonicity inequality (Lemma \ref{needed_lemma}), the assumptions that $\sigma_1 \geq \sigma_2$ and $\Lambda_{\sigma_1} = \Lambda_{\sigma_2}$, and the fact that $\sigma_{1}^{\frac{1}{p-1}} - \sigma_{2}^{\frac{1}{p-1}} \geq c_0 > 0$ in $D$, we deduce that
\begin{equation}\label{eq2}
 \int_{D}|A\nabla u_2\cdot \nabla u_2|^{p/2} dx \leq 0.
 \end{equation}
Then $|A\nabla u_2\cdot \nabla u_2|^{p/2} = 0$ a.e.\ in $D$, and the uniform ellipticity condition for $A$ implies that $\nabla u_2 = 0$ a.e.\ in $D$, i.e., $u_2$ is constant on $D$. By the unique continuation principle (Theorem \ref{thm:UCP 2d}), we know that $u_2$ is constant on $\Omega$. This contradicts the fact that $u_2$ had non-constant Dirichlet data $f$.
\end{proof}

\remark
Theorem~\ref{main_theorem} would remain valid in higher dimensions if the unique continuation principle would hold for solutions of~\eqref{eq:UCP equation}.

\section{Interior uniqueness in higher dimensions}\label{INTERIOR}

In this section, we will consider interior uniqueness for $p$-Laplace type inverse problems in dimensions $n\geq 3$ (the method also works when $n=2$). We will show that two conductivities $\sigma_1, \sigma_2$ that satisfy $\sigma_1 \geq \sigma_2$ in $\Omega$ and $\Lambda_{\sigma_1} = \Lambda_{\sigma_2}$ must be identical in $\Omega$, under the additional assumption that one of the conductivities (as well as the matrix $A$) is close to constant.

Our main result reads as follows:

\begin{thm}\label{thm2}
Let $\Omega \subset \mR^n$, $n \geq 2$, be a bounded open set with $C^{1,\alpha}$ boundary where $0 < \alpha < 1$, let $1 < p < \infty$, and let $M > 0$. There exists $\eps = \eps(n,p,\alpha,\Omega,M) > 0$ such that for any $\sigma_1, \sigma_2 \in C^{\alpha}(\overline{\Omega})$ and for any symmetric positive definite $A \in C^{\alpha}(\overline{\Omega}, \mR^{n \times n})$ satisfying 
\begin{gather*}
1/M \leq \sigma_1 \leq M \ \text{ in $\Omega$}, \\
\norm{\sigma_j}_{C^{\alpha}(\overline{\Omega})} + \norm{A}_{C^{\alpha}(\overline{\Omega})} \leq M, \\
\norm{\sigma_2-1}_{L^{\infty}(\Omega)} + \norm{A-I}_{L^{\infty}(\Omega)} \leq \eps,
\end{gather*}
the conditions $\sigma_1 \geq \sigma_2$ in $\Omega$ and $\Lambda_{\sigma_1} = \Lambda_{\sigma_2}$ imply that 
\[
\sigma_1 = \sigma_2 \ \text{in}\ \Omega.
\]
\end{thm}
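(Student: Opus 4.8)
The plan is to argue by contradiction, combining the monotonicity inequality of Lemma~\ref{needed_lemma} with a stability estimate: when $\sigma_2$ and $A$ are close to $1$ and $I$, the solution of the $\sigma_2$-equation with affine boundary data stays close in $C^1(\overline{\Omega})$ to that affine function, and hence has nowhere-vanishing gradient. So assume for contradiction that $\sigma_1 \neq \sigma_2$ in $\Omega$. By continuity of $\sigma_1,\sigma_2$ on $\overline{\Omega}$ and $\sigma_1 \geq \sigma_2$, the set $E = \{x\in\Omega : \sigma_1(x) > \sigma_2(x)\}$ is open and nonempty, so $\abs{E} > 0$. Fix a unit vector $\xi \in \mR^n$, put $\ell(x) = x\cdot\xi$, and let $u_2 \in W^{1,p}(\Omega)$ solve $\mdiv(\sigma_2\abs{A\nabla u_2\cdot\nabla u_2}^{(p-2)/2}A\nabla u_2) = 0$ in $\Omega$ with $u_2|_{\partial\Omega} = \ell$. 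Applying the left-hand inequality in Lemma~\ref{needed_lemma} with $f = \ell$ and using $\sigma_1 \geq \sigma_2$ and $\Lambda_{\sigma_1} = \Lambda_{\sigma_2}$,
\[
0 \;\leq\; (p-1)\int_\Omega \frac{\sigma_2}{\sigma_1^{1/(p-1)}}\bigl(\sigma_1^{1/(p-1)}-\sigma_2^{1/(p-1)}\bigr)\abs{A\nabla u_2\cdot\nabla u_2}^{p/2}\,dx \;\leq\; \langle(\Lambda_{\sigma_1}-\Lambda_{\sigma_2})\ell,\ell\rangle = 0 .
\]
The integrand is nonnegative, hence vanishes a.e.; since the weight is strictly positive on $E$, this forces $\abs{A\nabla u_2\cdot\nabla u_2} = 0$ a.e.\ in $E$, and the uniform ellipticity of $A$ gives $\nabla u_2 = 0$ a.e.\ in $E$.

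The core of the argument is to show there are $\eps_0 = \eps_0(n,p,\alpha,\Omega,M) > 0$ and a modulus $\rho$ with $\rho(0+)=0$, depending only on these parameters, such that $\norm{u_2 - \ell}_{C^1(\overline{\Omega})} \leq \rho(\eps)$ whenever $\eps \leq \eps_0$. I would establish this in two stages. First, a $W^{1,p}$ estimate: writing $u_2 = \ell + w$ with $w\in W^{1,p}_0(\Omega)$ and testing the equation against $w$; since $\ell$ is $p$-harmonic ($\nabla\ell$ is constant, so $\int_\Omega \abs{\nabla\ell}^{p-2}\nabla\ell\cdot\nabla w = 0$) this yields $\int_\Omega\bigl(\mathcal A(x,\nabla\ell+\nabla w) - \abs{\nabla\ell}^{p-2}\nabla\ell\bigr)\cdot\nabla w\,dx = 0$ with $\mathcal A(x,q) = \sigma_2\abs{Aq\cdot q}^{(p-2)/2}Aq$; combining the strong monotonicity of $\mathcal A$ (in the forms appropriate to $p\geq2$ and to $1<p<2$) with the pointwise bound $\abs{\mathcal A(x,\nabla\ell) - \abs{\nabla\ell}^{p-2}\nabla\ell} \leq C(p,M)\eps$ gives $\norm{\nabla w}_{L^p(\Omega)} \to 0$ as $\eps\to0$, quantitatively and uniformly over the admissible class. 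Second, a global regularity estimate: the $C^{1,\beta}$ boundary estimates for $p$-Laplace type equations with Hölder coefficients on domains with $C^{1,\alpha}$ boundary (this is where the boundary regularity hypothesis enters; cf.\ \cite{Lieberman:1988} and the interpolation results of Appendix~\ref{sec_appendix}) give $\norm{u_2}_{C^{1,\beta}(\overline{\Omega})} \leq C(n,p,\alpha,\Omega,M)$, since the ellipticity constants, $\norm{\sigma_2}_{C^\alpha}$, $\norm{A}_{C^\alpha}$, and $\norm{\ell}_{C^{1,\alpha}(\overline{\Omega})}$ are all controlled. Interpolating the uniform $C^{1,\beta}(\overline{\Omega})$ bound against $\norm{\nabla w}_{L^p(\Omega)}\to 0$ then yields $\norm{u_2 - \ell}_{C^1(\overline{\Omega})} \leq \rho(\eps)$; equivalently one may run this last step as a compactness argument along a sequence $\eps_k\to 0$.

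Finally, choose $\eps = \eps(n,p,\alpha,\Omega,M)$ so small that $\rho(\eps) < 1/2$; then $\abs{\nabla u_2} \geq \abs{\xi} - \rho(\eps) > 1/2$ everywhere in $\Omega$, contradicting $\nabla u_2 = 0$ a.e.\ on the positive-measure set $E$. Hence $E = \emptyset$, i.e.\ $\sigma_1 = \sigma_2$ in $\Omega$. I expect the main obstacle to be the first stage of the stability estimate — the quantitative $W^{1,p}$ convergence $u_2 \to \ell$ with a modulus independent of the particular $\sigma_2$ and $A$ — because for $p > 2$ the operator is degenerate and for $1 < p < 2$ it is singular, so one must use the appropriate quantitative monotonicity inequalities (and, for $p<2$, interpolate with the uniform gradient bounds). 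The global $C^{1,\beta}$ estimate up to the $C^{1,\alpha}$ boundary is a second, more technical ingredient, but it is essentially available in the literature.
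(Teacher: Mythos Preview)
Your proposal is correct and follows essentially the same route as the paper: the paper also perturbs the linear solution $u_0(x)=x_1$, proves a quantitative $W^{1,p}$ stability estimate for the perturbed solution (Lemma~\ref{higher_dim_lemma1}, with the same case split $p\geq 2$ versus $1<p<2$ that you anticipate), combines it with the uniform $C^{1,\beta}$ bound from \cite{Lieberman:1988} and the interpolation Lemma~\ref{inter} to get $\norm{\nabla u-\nabla u_0}_{L^\infty}\leq C\eps^\gamma$ (Lemma~\ref{higher_dim_lemma2}), and then feeds the resulting solution with $\abs{\nabla u}\geq 1/2$ into the monotonicity inequality exactly as you do. The only cosmetic difference is that the paper packages the stability estimate into separate lemmas before invoking the contradiction, whereas you run the contradiction first and then outline the stability argument inline.
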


The proof is again based on the monotonicity inequality and the fact that one can find solutions whose critical sets are small (in fact empty). However, since it is not known if the unique continuation principle holds for our equations in dimensions $n \geq 3$, we will construct solutions with nonvanishing gradient by perturbing a linear function $u_0(x) = x_1$ which solves the constant coefficient $p$-Laplace equation 
\[
\text{div}(|\nabla u_0|^{p-2}\nabla u_0) = 0 \ \text{in}\ \mathbb{R}^n.
\]
Alternatively, one could also perturb the complex geometrical optics or Wolff type solutions of the $p$-Laplace equation considered in \cite{Salo:Zhong:2012} which also have nonvanishing gradient.
 
The first step is to show that if $u_0$ solves $\mdiv(\sigma_0 \abs{A_0 \nabla u_0 \cdot \nabla u_0}^{\frac{p-2}{2}} \nabla u_0) = 0$ in $\Omega$, and if one perturbs $\sigma_0$ and $A_0$ slightly, then the solution $u_1$ of the perturbed equation will stay close to $u_0$ in $W^{1,p}$ norm if $u_1|_{\partial \Omega} = u_0|_{\partial \Omega}$.
 
\begin{lemma}\label{higher_dim_lemma1}
Let $\Omega \subset \mR^n$, $n \geq 2$, be a bounded open set, let $1 < p < \infty$, and let $M > 0$. There is $C = C(n,p,\Omega,M)$ such that for any $\sigma_0, \sigma_1 \in L^{\infty}_+(\Omega)$ and $A_0, A_1 \in L^{\infty}_+(\Omega, \mR^{n \times n})$ satisfying 
\[
1/M \leq \sigma_j \leq M, \qquad 1/M \leq A_j \leq M \qquad \text{a.e.\ in $\Omega$},
\]
one has 
\[
\norm{\nabla u_1 - \nabla u_0}_{L^p(\Omega)} \leq C (\norm{\sigma_1-\sigma_0}_{L^{\infty}(\Omega)} + \norm{A_1-A_0}_{L^{\infty}(\Omega)})^{\min\{ \frac{1}{p-1}, 1 \}} \norm{\nabla u_0}_{L^p(\Omega)}
\]
whenever $u_0, u_1 \in W^{1,p}(\Omega)$ solve 
\[
\mdiv(\sigma_j \abs{A_j \nabla u_j \cdot \nabla u_j}^{\frac{p-2}{2}} A_j \nabla u_j) = 0 \quad \text{in $\Omega$}
\]
and satisfy $u_1-u_0 \in W^{1,p}_0(\Omega)$.
\end{lemma}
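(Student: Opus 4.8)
The plan is the standard energy/monotonicity argument for quasilinear equations. Write $\mathcal{A}_j(x,\xi) = \sigma_j(x)\abs{A_j(x)\xi\cdot\xi}^{(p-2)/2}A_j(x)\xi$, so that $u_j$ solves $\mdiv \mathcal{A}_j(x,\nabla u_j)=0$ in the weak sense, and set $\phi := u_1 - u_0 \in W^{1,p}_0(\Omega)$ and $\delta := \norm{\sigma_1-\sigma_0}_{L^{\infty}(\Omega)} + \norm{A_1-A_0}_{L^{\infty}(\Omega)}$. Testing both equations against $\phi$ and subtracting gives
\[
\int_{\Omega} \sulut{\mathcal{A}_1(x,\nabla u_1) - \mathcal{A}_1(x,\nabla u_0)}\cdot\nabla\phi\,dx = -\int_{\Omega} \sulut{\mathcal{A}_1(x,\nabla u_0) - \mathcal{A}_0(x,\nabla u_0)}\cdot\nabla\phi\,dx.
\]
For the left side I would use the classical monotonicity of $\xi \mapsto \abs{\xi}^{p-2}\xi$, transferred to the matrix-weighted operator by writing $A_1(x) = B_1(x)^2$ with $B_1(x)$ the symmetric positive square root and substituting $a = B_1\nabla u_1$, $b = B_1\nabla u_0$: using $1/M \le \sigma_1,A_1 \le M$ this yields, pointwise, a lower bound $c\abs{\nabla\phi}^p$ when $p \ge 2$, and $c\,\abs{\nabla\phi}^2\sulut{\abs{\nabla u_0}+\abs{\nabla u_1}}^{p-2}$ when $1<p<2$, with $c = c(n,p,M)>0$. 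For the right side I would note that $\xi \mapsto \mathcal{A}(x,\xi)$ is $(p-1)$-homogeneous and depends linearly on $\sigma$ and smoothly on $A$ over the uniformly elliptic range $\{1/M\le \cdot \le M\}$ (a convex set containing the segment from $A_0$ to $A_1$), hence $\abs{\mathcal{A}_1(x,\xi)-\mathcal{A}_0(x,\xi)} \le C(n,p,M)\,\delta\,\abs{\xi}^{p-1}$; then Hölder with exponents $p' = p/(p-1)$ and $p$ bounds the right side by $C\delta\norm{\nabla u_0}_{L^p(\Omega)}^{p-1}\norm{\nabla\phi}_{L^p(\Omega)}$.

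When $p \ge 2$ the two bounds combine immediately into $\norm{\nabla\phi}_{L^p(\Omega)}^{p-1} \le C\delta\norm{\nabla u_0}_{L^p(\Omega)}^{p-1}$, which is the claim with exponent $1/(p-1) = \min\{1/(p-1),1\}$. When $1<p<2$ the coercivity is only the degenerate weighted quantity, so I would first record the a priori bound $\norm{\nabla u_1}_{L^p(\Omega)} \le C(n,p,M)\norm{\nabla u_0}_{L^p(\Omega)}$, obtained by comparing $p$-Dirichlet energies (both $u_0,u_1$ have the same trace and $u_1$ minimizes $E_p$ for $(\sigma_1,A_1)$). Then Hölder with exponents $2/p$ and $2/(2-p)$ gives
\[
\norm{\nabla\phi}_{L^p(\Omega)}^p \le \sulut{\int_{\Omega} \frac{\abs{\nabla\phi}^2}{\sulut{\abs{\nabla u_0}+\abs{\nabla u_1}}^{2-p}}\,dx}^{p/2}\sulut{\norm{\nabla u_0}_{L^p(\Omega)}+\norm{\nabla u_1}_{L^p(\Omega)}}^{p(2-p)/2},
\]
so the weighted integral dominates $\norm{\nabla\phi}_{L^p(\Omega)}^2 \big/ \sulut{\norm{\nabla u_0}_{L^p(\Omega)}+\norm{\nabla u_1}_{L^p(\Omega)}}^{2-p}$; combining this with the main inequality and the a priori bound yields $\norm{\nabla\phi}_{L^p(\Omega)} \le C\delta\norm{\nabla u_0}_{L^p(\Omega)}$, which is the claim with exponent $1 = \min\{1/(p-1),1\}$.

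The routine ingredients are the two classical vector monotonicity inequalities for $\abs{\xi}^{p-2}\xi$ and the elementary differentiation estimate for $\mathcal{A}$ in the coefficients (taking care that the derivative in $A$ is $O(\abs{\xi}^{p-1})$, which is harmless as $\xi\to 0$ since $p>1$). I expect the only genuinely delicate point to be the subquadratic case $1<p<2$: there the natural energy estimate controls only the degenerate weighted $L^2$ norm of $\nabla\phi$, and one must interpolate back to $L^p$ using the a priori bound on $\norm{\nabla u_1}_{L^p(\Omega)}$. It is exactly this interpolation, together with the different coercivity exponents in the two regimes, that produces the exponent $\min\{1/(p-1),1\}$ in the statement and forces the case split.
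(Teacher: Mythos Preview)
Your proposal is correct and follows essentially the same approach as the paper: test both equations against $\phi=u_1-u_0$, use the monotonicity of $\xi\mapsto|\xi|^{p-2}\xi$ transferred through the factorization $A_1=B_1^\top B_1$ to control the left side, bound the coefficient-difference term by $C\delta|\nabla u_0|^{p-1}$ on the right, and for $1<p<2$ interpolate the degenerate weighted estimate back to $L^p$ via H\"older with exponents $2/p$, $2/(2-p)$ together with the a priori bound $\norm{\nabla u_1}_{L^p}\le C\norm{\nabla u_0}_{L^p}$. The only cosmetic difference is that the paper organizes both regimes around the single quantity $I=\int_\Omega(|\nabla u_0|+|\nabla u_1|)^{p-2}|\nabla\phi|^2\,dx$ and proves one estimate \eqref{i_estimate} for it, whereas you split into the two coercivity cases first; the content is the same.
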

\begin{proof}
Consider the expression 
\[
I := \int_{\Omega} (\abs{\nabla u_1} + \abs{\nabla u_0})^{p-2} \abs{\nabla u_1 - \nabla u_0}^2 \,dx.
\]
We will prove the estimate  
\begin{equation} \label{i_estimate}
I \leq C (\norm{\sigma_1-\sigma_0}_{L^{\infty}} + \norm{A_1-A_0}_{L^{\infty}}) \norm{\nabla u_0}_{L^p}^{p-1} \norm{\nabla u_1-\nabla u_0}_{L^p}.
\end{equation}
This implies the statement in the lemma: if $p \geq 2$ the triangle inequality gives 
\[
\int_{\Omega} \abs{\nabla u_1-\nabla u_0}^p \,dx \leq  \int_{\Omega} (\abs{\nabla u_1} + \abs{\nabla u_0})^{p-2} \abs{\nabla u_1 - \nabla u_0}^2 \,dx = I
\]
and \eqref{i_estimate} yields 
\begin{equation} \label{pgeqtwo_estimate}
\norm{\nabla u_1 - \nabla u_0}_{L^p} \leq C (\norm{\sigma_1-\sigma_0}_{L^{\infty}} + \norm{A_1-A_0}_{L^{\infty}})^{\frac{1}{p-1}} \norm{\nabla u_0}_{L^p}.
\end{equation}
On the other hand, if $1 < p < 2$ we write 
\[
\int_{\Omega} \abs{\nabla u_1-\nabla u_0}^p \,dx = \int_{\Omega} \left[ (\abs{\nabla u_1} + \abs{\nabla u_0})^{p-2} \abs{\nabla u_1 - \nabla u_0}^2 \right]^{p/2} (\abs{\nabla u_1} + \abs{\nabla u_0})^{\frac{p(2-p)}{2}} \,dx
\]
and use H\"older's inequality with exponents $q = 2/p$ and $q' = 2/(2-p)$ to get 
\[
\norm{\nabla u_1 - \nabla u_0}_{L^p}^p \leq I^{p/2} \left( \int_{\Omega} (\abs{\nabla u_1} + \abs{\nabla u_0})^p \,dx \right)^{\frac{2-p}{2}} \leq C I^{p/2} (\norm{\nabla u_1}_{L^p}^p + \norm{\nabla u_0}_{L^p}^p)^{\frac{2-p}{2}}.
\]
One also has $\norm{\nabla u_1}_{L^p} \leq C \norm{\nabla u_0}_{L^p}$ (this can be seen by integrating the equation for $u_1$ against the test function $u_1-u_0 \in W^{1,p}_0(\Omega)$). Using \eqref{i_estimate} yields 
\begin{equation} \label{plesstwo_estimate}
\norm{\nabla u_1 - \nabla u_0}_{L^p} \leq C (\norm{\sigma_1-\sigma_0}_{L^{\infty}} + \norm{A_1-A_0}_{L^{\infty}}) \norm{\nabla u_0}_{L^p}.
\end{equation}
The lemma follows by combining \eqref{pgeqtwo_estimate} (when $p \geq 2$) and \eqref{plesstwo_estimate} (when $1 < p < 2$).

It remains to show \eqref{i_estimate}. For any fixed $x$ (outside a set of measure zero), we may factorize $A_j = B_j^t B_j$ so that one has $\abs{B_j \xi}^2 = A_j \xi \cdot \xi$ and 
\[
\frac{1}{M} \abs{\xi}^2 \leq \abs{B_j \xi}^2 \leq M \abs{\xi}^2, \qquad \xi \in \mR^n.
\]
Using a basic inequality (see e.g.\ \cite[equation (A.4)]{Salo:Zhong:2012}) we have, for some $C = C(n,p,M)$ which may change from line to line and for a.e.\ $x$, 
\begin{align*}
 &(\abs{\nabla u_1} + \abs{\nabla u_0})^{p-2} \abs{\nabla u_1 - \nabla u_0}^2 \leq C (\abs{B_1 \nabla u_1} + \abs{B_1 \nabla u_0})^{p-2} \abs{B_1 \nabla u_1 - B_1 \nabla u_0}^2 \\
 &\qquad \leq C (\abs{B_1 \nabla u_1}^{p-2} B_1 \nabla u_1 - \abs{B_1 \nabla u_0}^{p-2} B_1 \nabla u_0) \cdot (B_1 \nabla u_1 - B_1 \nabla u_0) \\
 &\qquad \leq C \sigma_1 (\abs{A_1 \nabla u_1 \cdot \nabla u_1}^{\frac{p-2}{2}} A_1 \nabla u_1 - \abs{A_1 \nabla u_0 \cdot \nabla u_0}^{\frac{p-2}{2}} A_1 \nabla u_0) \cdot (\nabla u_1 - \nabla u_0).
\end{align*}
Using that $u_j$ are solutions and $u_1-u_0 \in W^{1,p}_0(\Omega)$, it follows that 
\begin{align*}
I &\leq C \int_{\Omega} \sigma_1 (\abs{A_1 \nabla u_1 \cdot \nabla u_1}^{\frac{p-2}{2}} A_1 \nabla u_1 - \abs{A_1 \nabla u_0 \cdot \nabla u_0}^{\frac{p-2}{2}} A_1 \nabla u_0) \cdot (\nabla u_1 - \nabla u_0) \,dx \\
 &=  -C \int_{\Omega} \sigma_1 \abs{A_1 \nabla u_0 \cdot \nabla u_0}^{\frac{p-2}{2}} A_1 \nabla u_0 \cdot (\nabla u_1 - \nabla u_0) \,dx \\
 &=  -C \int_{\Omega} (\sigma_1 \abs{A_1 \nabla u_0 \cdot \nabla u_0}^{\frac{p-2}{2}} A_1 \nabla u_0 - \sigma_0 \abs{A_0 \nabla u_0 \cdot \nabla u_0}^{\frac{p-2}{2}} A_0 \nabla u_0) \cdot (\nabla u_1 - \nabla u_0) \,dx
\end{align*}
Writing $\sigma_1 = (\sigma_1-\sigma_0) + \sigma_0$ and using the H\"older inequality, we get 
\begin{multline*}
I \leq C \norm{\sigma_1-\sigma_0}_{L^{\infty}} \norm{\nabla u_0}_{L^p}^{p-1} \norm{\nabla u_1-\nabla u_0}_{L^p} \\
 + C \norm{\abs{A_1 \nabla u_0 \cdot \nabla u_0}^{\frac{p-2}{2}} A_1 \nabla u_0 - \abs{A_0 \nabla u_0 \cdot \nabla u_0}^{\frac{p-2}{2}} A_0 \nabla u_0}_{L^{p/(p-1)}} \norm{\nabla u_1-\nabla u_0}_{L^p}.
\end{multline*}
Writing $A_1 \nabla u_0 = (A_1 \nabla u_0 - A_0 \nabla u_0) + A_0 \nabla u_0$ and using that $\abs{a_1^{\frac{p-2}{2}}-a_0^{\frac{p-2}{2}}} \leq C \abs{a_1-a_0}$ when $1/M \leq a_j \leq M$ (choosing $a_j = A_j \frac{\nabla u_0}{\abs{\nabla u_0}} \cdot \frac{\nabla u_0}{\abs{\nabla u_0}}$), we obtain \eqref{i_estimate}.
\end{proof}

We now interpolate the $W^{1,p}$ control of $u_1-u_0$ in Lemma \ref{higher_dim_lemma1} with the uniform bounds obtained from the $C^{1,\beta}$ regularity theory of $p$-Laplace type equations to show that $\nabla u_1$ is actually close to $\nabla u_0$ in $L^{\infty}$.

\begin{lemma}\label{higher_dim_lemma2}
Let $\Omega \subset \mR^n$, $n \geq 2$, be a bounded open set with $C^{1,\alpha}$ boundary where $0 < \alpha < 1$, let $1 < p < \infty$, and let $M > 0$. There exist $C > 0$ and $\gamma > 0$, depending on $n,p,\alpha,\Omega,M$, such that for any $\sigma_0, \sigma_1 \in C^{\alpha}(\overline{\Omega})$ and $A_0, A_1 \in L^{\infty}_+(\Omega, \mR^{n \times n})$ satisfying 
\begin{gather*}
1/M \leq \sigma_j, A_j \leq M \ \text{ in $\Omega$}, \\
\norm{\sigma_j}_{C^{\alpha}(\overline{\Omega})} + \norm{A_j}_{C^{\alpha}(\overline{\Omega})} \leq M,
\end{gather*}
and for any $f \in C^{1,\alpha}(\overline{\Omega})$ satisfying 
\[
\norm{f}_{C^{1,\alpha}(\overline{\Omega})} \leq M,
\]
one has 
\[
\norm{\nabla u_1 - \nabla u_0}_{L^{\infty}(\Omega)} \leq C (\norm{\sigma_1-\sigma_0}_{L^{\infty}(\Omega)} + \norm{A_1-A_0}_{L^{\infty}(\Omega)})^{\gamma}
\]
whenever $u_0, u_1 \in W^{1,p}(\Omega)$ solve 
\[
\mdiv(\sigma_j \abs{A_j \nabla u_j \cdot \nabla u_j}^{\frac{p-2}{2}} A_j \nabla u_j) = 0 \quad \text{in $\Omega$}
\]
and satisfy $u_1|_{\partial \Omega} = u_0|_{\partial \Omega} = f|_{\partial \Omega}$.
\end{lemma}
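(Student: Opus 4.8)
The plan is to \emph{interpolate}: combine the $W^{1,p}$-closeness of $\nabla u_1$ and $\nabla u_0$ supplied by Lemma \ref{higher_dim_lemma1} with a uniform Hölder bound on both gradients, and deduce $L^\infty$-closeness at a quantitative rate. Write $\delta := \norm{\sigma_1 - \sigma_0}_{L^\infty(\Omega)} + \norm{A_1 - A_0}_{L^\infty(\Omega)}$. Since all quantities in sight will be uniformly bounded, it suffices to prove the estimate when $\delta$ is smaller than a constant depending only on $n,p,\alpha,\Omega,M$; for $\delta$ bounded below the claim follows by enlarging $C$.

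First I would invoke the global $C^{1,\beta}$ regularity theory for degenerate quasilinear equations of $p$-Laplace type (Lieberman \cite{Lieberman:1988}; see also Appendix \ref{sec_appendix}): because the coefficients $\sigma_j, A_j$ are bounded in $C^\alpha(\overline\Omega)$ and uniformly elliptic, $\partial\Omega$ is $C^{1,\alpha}$, and the boundary data is bounded in $C^{1,\alpha}(\overline\Omega)$, there are $\beta \in (0,\alpha]$ and $C_0 > 0$, depending only on $n,p,\alpha,\Omega,M$, with $\norm{u_j}_{C^{1,\beta}(\overline\Omega)} \leq C_0$ for $j=0,1$. Hence $\norm{\nabla u_0}_{L^p(\Omega)} \leq C_0 |\Omega|^{1/p}$ and $\norm{\nabla u_1 - \nabla u_0}_{C^{0,\beta}(\overline\Omega)} \leq 2C_0$. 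Since $u_1 - u_0 \in W^{1,p}_0(\Omega)$, Lemma \ref{higher_dim_lemma1} then gives $\norm{\nabla u_1 - \nabla u_0}_{L^p(\Omega)} \leq C\delta^{\min\{1/(p-1),\,1\}} =: \eta$. The main obstacle here is making sure the regularity constants really are uniform over the whole admissible class of $(\sigma_j, A_j, f)$ and hold up to the boundary; this is exactly what Lieberman's boundary estimates provide, but it must be quoted with care (or assembled in the appendix).

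Finally I would carry out the elementary interpolation. Set $w := \nabla u_1 - \nabla u_0$, so $\norm{w}_{C^{0,\beta}(\overline\Omega)} \leq 2C_0$ and $\norm{w}_{L^p(\Omega)} \leq \eta$. Pick $x_0 \in \overline\Omega$ with $|w(x_0)| \geq \tfrac12 \norm{w}_{L^\infty(\Omega)}$. For $r$ small, Hölder continuity gives $|w| \geq |w(x_0)| - 2C_0 r^\beta$ on $B(x_0,r)$, so choosing $r$ with $2C_0 r^\beta = \tfrac12|w(x_0)|$ yields $|w| \geq \tfrac12 |w(x_0)|$ there; since $\Omega$ is Lipschitz it satisfies an interior cone condition, whence $|B(x_0,r) \cap \Omega| \geq c_\Omega r^n$, and therefore $\eta^p \geq \int_{B(x_0,r)\cap\Omega} |w|^p \geq c_\Omega (\tfrac12|w(x_0)|)^p r^n = c_\Omega' |w(x_0)|^{p + n/\beta}$ (here one uses $\delta$, hence $\norm{w}_{L^\infty}$, small enough that the chosen $r$ stays below the cone-condition scale, the opposite case being trivial). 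This gives $\norm{w}_{L^\infty(\Omega)} \leq C\eta^{p\beta/(p\beta+n)}$, and recalling $\eta = C\delta^{\min\{1/(p-1),1\}}$ we obtain the lemma with $\gamma = \frac{p\beta}{p\beta+n}\min\{\frac{1}{p-1},1\}$.
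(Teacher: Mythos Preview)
Your proof is correct and follows the same overall strategy as the paper: invoke Lieberman's global $C^{1,\beta}$ regularity to bound $\norm{\nabla u_j}_{C^{\beta}}$ uniformly, apply Lemma \ref{higher_dim_lemma1} for the $L^p$ estimate on $\nabla u_1-\nabla u_0$, and then interpolate between the $L^p$ and $C^{\beta}$ bounds. The only difference is in the interpolation step: the paper quotes an abstract interpolation lemma (Lemma \ref{inter}) proved via Besov/Triebel--Lizorkin embeddings, whereas you carry out the interpolation by hand with the standard ball argument, which is more self-contained and in fact yields the sharp exponent $\gamma = \frac{p\beta}{p\beta+n}\min\{\frac{1}{p-1},1\}$ rather than any value strictly below it.
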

\begin{proof}
Under the stated assumptions, the weak solutions $u_0$ and $u_1$ are $C^{1,\beta}$ regular up to the boundary, see for instance \cite{Lieberman:1988}. More precisely, there exists $\beta = \beta(n,p,\alpha,\Omega,M)$ with $0 < \beta < 1$ so that $u_0$ and $u_1$ satisfy 
\begin{equation} \label{uj_conebeta_regularity}
\norm{u_j}_{C^{1,\beta}(\overline{\Omega})} \leq C
\end{equation}
where $C = C(n,p,\alpha,\Omega,M)$ may change from line to line. Clearly also $\norm{u_j}_{W^{1,p}(\Omega)} \leq C$. It follows from Lemma \ref{higher_dim_lemma1} that 
\[
\norm{\nabla u_1-\nabla u_0}_{L^p(\Omega)} \leq C (\norm{\sigma_1-\sigma_0}_{L^{\infty}(\Omega)} + \norm{A_1-A_0}_{L^{\infty}(\Omega)})^{\min\{ \frac{1}{p-1}, 1 \}}.
\]
On the other hand, \eqref{uj_conebeta_regularity} implies 
\[
\norm{\nabla u_1-\nabla u_0}_{C^{\beta}(\overline{\Omega})} \leq C.
\]
The lemma follows by interpolating the last two estimates by using Lemma \ref{inter} in the appendix.
\end{proof}

Now we show that the linear function $u_0(x) = x_1$ solving 
\[
\mdiv(\abs{\nabla u_0}^{p-2} \nabla u_0) = 0 \text{ in $\Omega$}
\]
can be perturbed into a solution of $\mdiv(\sigma \abs{A \nabla u \cdot \nabla u}^{\frac{p-2}{2}} A \nabla u) = 0$ having nonvanishing gradient, if $\sigma$ and $A$ are sufficiently close to constant.

\begin{lemma}\label{interiorHigh}
Let $\Omega \subset \mR^n$, $n \geq 2$, be a bounded open set with $C^{1,\alpha}$ boundary where $0 < \alpha < 1$, let $1 < p < \infty$, and let $M > 0$. There exists $\eps = \eps(n,p,\alpha,\Omega,M) > 0$ such that for any $\sigma \in C^{\alpha}(\overline{\Omega})$ and for any symmetric positive definite $A \in C^{\alpha}(\overline{\Omega}, \mR^{n \times n})$ satisfying 
\begin{gather*}
\norm{\sigma}_{C^{\alpha}(\overline{\Omega})} + \norm{A}_{C^{\alpha}(\overline{\Omega})} \leq M, \\
\norm{\sigma-1}_{L^{\infty}(\Omega)} + \norm{A-I}_{L^{\infty}(\Omega)} \leq \eps,
\end{gather*}
there exists a solution $u \in C^1(\overline{\Omega})$ of 
\[
\mdiv(\sigma \abs{A \nabla u \cdot \nabla u}^{\frac{p-2}{2}} A \nabla u) = 0 \text{ in $\Omega$}
\]
satisfying $\abs{\nabla u} \geq 1/2$ in $\Omega$.
\end{lemma}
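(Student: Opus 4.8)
\emph{Proof sketch.} The plan is to produce $u$ as a small perturbation, in $C^1$ norm, of the linear function $u_0(x) = x_1$, and to control the size of the perturbation by invoking the gradient stability estimate of Lemma~\ref{higher_dim_lemma2}.

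First I would record that $u_0(x) = x_1$ solves the constant-coefficient equation $\mdiv(\abs{\nabla u_0}^{p-2}\nabla u_0) = 0$: since $\nabla u_0 \equiv e_1$, the vector field $\abs{\nabla u_0}^{p-2}\nabla u_0 \equiv e_1$ is constant, hence divergence free. Thus $u_0$ is the solution corresponding to $\sigma_0 \equiv 1$ and $A_0 \equiv I$ with its own boundary data, and because $\Omega$ is bounded one has $\norm{u_0}_{C^{1,\alpha}(\overline{\Omega})} =: M_0 < \infty$, depending only on $n$ and $\Omega$. Set $M' := \max\{M, M_0, 2\}$.

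Next, after first shrinking $\eps$ to be at most $1/2$, the hypothesis $\norm{\sigma-1}_{L^{\infty}(\Omega)} + \norm{A-I}_{L^{\infty}(\Omega)} \le \eps$ forces $1/M' \le \sigma \le M'$ and $\tfrac{1}{M'}\abs{\xi}^2 \le A\xi\cdot\xi \le M'\abs{\xi}^2$ a.e., so that both pairs $(\sigma,A)$ and $(\sigma_0,A_0) = (1,I)$ satisfy the hypotheses of Lemma~\ref{higher_dim_lemma2} with constant $M'$ and boundary data $f = u_0$. Let $u \in W^{1,p}(\Omega)$ be the unique solution of $\mdiv(\sigma \abs{A\nabla u \cdot \nabla u}^{(p-2)/2} A\nabla u) = 0$ in $\Omega$ with $u|_{\partial\Omega} = u_0|_{\partial\Omega}$, so that $u - u_0 \in W^{1,p}_0(\Omega)$; by \cite{Lieberman:1988} (as already used in Lemma~\ref{higher_dim_lemma2}) $u \in C^{1,\beta}(\overline{\Omega}) \subset C^1(\overline{\Omega})$. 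Lemma~\ref{higher_dim_lemma2} then yields constants $C, \gamma > 0$, depending only on $n, p, \alpha, \Omega, M$, with
\[
\norm{\nabla u - \nabla u_0}_{L^{\infty}(\Omega)} \le C\bigl(\norm{\sigma-1}_{L^{\infty}(\Omega)} + \norm{A-I}_{L^{\infty}(\Omega)}\bigr)^{\gamma} \le C\eps^{\gamma}.
\]
Shrinking $\eps$ a second time so that $C\eps^{\gamma} \le 1/2$, and using $\abs{\nabla u_0} \equiv 1$, the reverse triangle inequality gives $\abs{\nabla u(x)} \ge 1 - C\eps^{\gamma} \ge 1/2$ for all $x \in \overline{\Omega}$, which is the claim.

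The content of this lemma is essentially a corollary of the interpolation and stability estimates established earlier, so I do not expect a genuine obstacle here; the only care needed is the bookkeeping — verifying that $x_1$ really solves the constant-coefficient equation, checking that the $C^{1,\alpha}$ norm of the boundary data $u_0$ and the (perturbed) ellipticity constants can be absorbed into a single admissible constant $M'$, and performing the two smallness reductions of $\eps$ (first to guarantee uniform ellipticity of $\sigma$ and $A$, then to beat the threshold $1/2$) in the correct order. As noted in the text one could instead perturb the Wolff type solutions of \cite{Salo:Zhong:2012}, which also have nonvanishing gradient, but the linear function $u_0(x) = x_1$ is the most economical choice.
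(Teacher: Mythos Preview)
Your proof is correct and follows essentially the same approach as the paper's own proof: perturb the linear solution $u_0(x)=x_1$ of the constant-coefficient equation, apply Lemma~\ref{higher_dim_lemma2} to control $\norm{\nabla u-\nabla u_0}_{L^\infty}$, and choose $\eps$ small enough to conclude $\abs{\nabla u}\ge 1/2$ by the reverse triangle inequality. The only differences are cosmetic bookkeeping (your explicit introduction of $M'$ and the two-step reduction of $\eps$), which the paper handles slightly more tersely.
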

\begin{proof}
Note that by taking $\eps$ small enough, one has 
\[
1/2 \leq \sigma \leq 2, \qquad 1/2 \leq A \leq 2 \ \text{ in $\Omega$}.
\]
Choose $\sigma_1 = \sigma$, $A_1 = A$ and $\sigma_0 = 1$, $A_0 = I$, and observe that the linear function $u_0(x) = x_1$ solves the $p$-Laplace equation 
\[
\mdiv(\sigma_0 \abs{A_0 \nabla u_0 \cdot \nabla u_0}^{\frac{p-2}{2}} A_0 \nabla u_0) = 0 \text{ in $\Omega$}.
\]
By Lemma \ref{higher_dim_lemma2}, there are $C > 0$ and $\gamma > 0$ so that the solution $u = u_1$ of 
\[
\mdiv(\sigma_1 \abs{A_1 \nabla u \cdot \nabla u}^{\frac{p-2}{2}} A_1 \nabla u) = 0 \text{ in $\Omega$}, \qquad u|_{\partial \Omega} = u_0|_{\partial \Omega}
\]
satisfies 
\[
\norm{\nabla u - \nabla u_0}_{L^{\infty}(\Omega)} \leq C (\norm{\sigma-1}_{L^{\infty}(\Omega)} + \norm{A-I}_{L^{\infty}(\Omega)})^{\gamma}.
\]
If $\eps$ is chosen so that $C(2\eps)^{\gamma} \leq 1/2$, we have 
\[
\abs{\nabla u} \geq \abs{\nabla u_0} - \abs{\nabla u - \nabla u_0} \geq 1/2 \text{ in $\Omega$.} \qedhere
\]
\end{proof}

\begin{proof}[\it{Proof of Theorem \ref{thm2}}]
First choose $\eps$ as in Lemma \ref{interiorHigh}, and choose $u \in W^{1,p}(\Omega)$ so that $u$ solves 
\[
\mdiv(\sigma_2 \abs{A \nabla u \cdot \nabla u}^{\frac{p-2}{2}} A \nabla u) = 0 \text{ in $\Omega$}
\]
and satisfies $\abs{\nabla u} \geq 1/2$ in $\Omega$. We now use a similar argument as in the proof of Theorem \ref{main_theorem}. The conditions $\sigma_1 \geq \sigma_2$ in $\Omega$ and $\Lambda_{\sigma_1} = \Lambda_{\sigma_2}$ together with the monotonicity inequality, Lemma \ref{needed_lemma}, imply that 
\[
\abs{\nabla u_2}^p = 0 \text{ a.e.\ in $E$}
\]
for any $u_2$ solving $\mdiv(\sigma_2 \abs{A\nabla u_2\cdot \nabla u_2}^{(p-2)/2} A\nabla u_2) = 0$, where $E = \{Êx \in \Omega \,;\, \sigma_1(x) > \sigma_2(x) \}$. If the open set $E$ were nonempty, one could choose $u_2 = u$ to obtain a contradiction. Thus $E$ must be empty and we have $\sigma_1 = \sigma_2$ in $\Omega$.
\end{proof}

%

\appendix

\section{Auxiliary results} \label{sec_appendix}

In this appendix, we first prove a result related to the decomposition of a positive definite matrix having continuous entries and then state an interpolation result. Finally we finish this section by recalling a proof of the unique continuation principle for the weighted $p$-Laplace equation in the plane.

\subsection{Matrix decomposition}
\begin{lemma}\label{existence_B}
Let $A \in C(\overline{\Omega}, \mR^{n \times n})$ be an $n\times n$ symmetric positive definite matrix function. Then there exists a matrix function $B \in C(\overline{\Omega}, \mR^{n \times n})$ such that $A=B^{\top}B$.
\end{lemma}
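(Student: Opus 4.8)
The plan is to take $B$ to be the unique symmetric positive definite square root of $A$, defined pointwise, and then to check that this choice depends continuously on $x$. Since each $B(x)$ is symmetric we then automatically get $B^{\top}B = B^2 = A$, so the entire content of the lemma is the continuity statement.

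First I would upgrade pointwise positive definiteness to a uniform spectral bound. For each $x \in \overline{\Omega}$ the smallest eigenvalue $\lambda_{\min}(A(x))$ is strictly positive, and $x \mapsto \lambda_{\min}(A(x))$ is continuous because the eigenvalues of a symmetric matrix depend continuously on its entries; since $\overline{\Omega}$ is compact, there exist constants $0 < \lambda_- \le \lambda_+ < \infty$ such that all eigenvalues of $A(x)$ lie in $[\lambda_-,\lambda_+]$ for every $x$. By the spectral theorem each $A(x)$ then has a unique symmetric positive definite square root $A(x)^{1/2}$, and its eigenvalues lie in $[\lambda_-^{1/2},\lambda_+^{1/2}]$.

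Next I would establish continuity of $x \mapsto A(x)^{1/2}$ by polynomial approximation. Given $\delta > 0$, the Weierstrass approximation theorem provides a real polynomial $q$ with $\sup_{t \in [\lambda_-,\lambda_+]} \abs{q(t) - t^{1/2}} < \delta$. Diagonalizing $A(x)$ (functional calculus for symmetric matrices), one gets the operator norm bound $\norm{q(A(x)) - A(x)^{1/2}} \le \sup_{t \in [\lambda_-,\lambda_+]} \abs{q(t) - t^{1/2}} < \delta$ for every $x \in \overline{\Omega}$. But $q(A(x))$ is a fixed polynomial expression in the entries of $A(x)$, hence continuous in $x$. Therefore $x \mapsto A(x)^{1/2}$ is, on $\overline{\Omega}$, a uniform limit (as $\delta \to 0$) of continuous matrix-valued functions, and is thus itself continuous. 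Setting $B := A^{1/2}$ completes the argument.

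I do not anticipate a genuine obstacle here; the only point requiring a little care is the passage from pointwise to uniform positive definiteness, which is exactly where compactness of $\overline{\Omega}$ enters and is needed so that one interval $[\lambda_-,\lambda_+]$ serves simultaneously for all $x$ in the approximation step. As an alternative to the polynomial argument one could use the Riesz–Dunford contour integral $A(x)^{1/2} = \frac{1}{2\pi i}\oint_{\Gamma} z^{1/2}(zI - A(x))^{-1}\,\der z$, with $\Gamma$ a fixed contour in $\{\re z > 0\}$ enclosing $[\lambda_-,\lambda_+]$, whose continuity in $x$ follows from continuity of the resolvent; but the polynomial version has the advantage of avoiding complex analysis altogether.
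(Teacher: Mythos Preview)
Your proof is correct and takes a genuinely different route from the paper. The paper applies the Gram--Schmidt process to the standard basis $\{e_1,\ldots,e_n\}$ with respect to the $x$-dependent inner product $\langle v,w\rangle_{A(x)} = A(x)v\cdot w$; the resulting orthonormal frame forms the columns of a continuous matrix $V(x)$ with $V^\top A V = I$, and one sets $B = V^{-1}$. That argument is purely pointwise-algebraic: each Gram--Schmidt step is a rational expression in the entries of $A(x)$ with nonvanishing denominators, so continuity is immediate and no uniform spectral bound (hence no compactness of $\overline{\Omega}$) is needed. The resulting $B$ is upper triangular---essentially a continuous Cholesky factorization---rather than symmetric. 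Your functional-calculus approach instead yields the unique \emph{symmetric} positive definite square root $B=A^{1/2}$, which actually matches what the paper invokes in the proof of Lemma~\ref{needed_lemma} (where $B$ is asserted to be symmetric); the cost is the appeal to compactness of $\overline{\Omega}$ to get a single interval $[\lambda_-,\lambda_+]$ for the Weierstrass approximation. This is harmless here since $\Omega$ is bounded throughout the paper, and could in any case be localized because continuity is a pointwise notion.
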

\begin{proof}
Consider the following inner product and norm on $\mR^n$ defined for $x \in \Omega$, 
\[
\langle v, w \rangle_{A(x)} = A(x) v \cdot w, \qquad \abs{v}_{A(x)} = (A(x)v \cdot v)^{1/2}, \qquad v, w \in \mR^n.
\]
We apply the Gram-Schmidt orthonormalization procedure to the standard basis $\{ e_1, \ldots, e_n \}$ of $\mR^n$ with respect to this inner product. Define 
\[
w_1(x) = e_1, \qquad v_1(x) = w_1/\abs{w_1}_{A(x)},
\]
and if $k \geq 2$ define inductively 
\begin{align*}
w_k(x) &= e_k - \langle e_k, v_1(x) \rangle_{A(x)} v_1(x) - \ldots - \langle e_k, v_{k-1}(x) \rangle_{A(x)} v_{k-1}(x), \\
v_k(x) &= w_k(x)/\abs{w_k(x)}_{A(x)}.
\end{align*}
Now $v_1(x) = e_1/\sqrt{a_{11}(x)}$ is a continuous vector function in $x$, and inductively one sees that each $v_k(x)$ is also continuous in $x$. Here it is crucial that $A(x)$ is positive definite and that $\{Êe_1, \ldots, e_n \}$ are linearly independent, so the denominators $\abs{w_k(x)}_{A(x)}$ are positive and continuous.

The above process leads to a basis $\{ v_1(x), \ldots, v_n(x) \}$ of $\mR^n$ which is orthonormal in the $A(x)$ inner product, i.e.\ $A(x) v_j(x) \cdot v_k(x) = \delta_{jk}$. Then $V(x) = \left( \begin{array}{ccc} v_1(x) & \ldots & v_n(x) \end{array} \right)$ is a matrix function in $C(\overline{\Omega}, \mR^{n \times n})$ and satisfies 
\[
V(x)^{\top} A(x) V(x) = I.
\]
Linear independence implies that $\det(V(x)) \neq 0$ for all $x \in \Omega$. It follows that the matrix $B(x) = V(x)^{-1}$ is in $C(\overline{\Omega}, \mR^{n \times n})$ and satisfies $B(x)^{\top} B(x) = A(x)$ in $\Omega$.
\end{proof}

\subsection{Interpolation}

\begin{lemma} \label{inter}
Let $\Omega$ be a bounded Lipschitz domain in $\mathbb{R}^n, n\geq 2,$ let $0<\beta< 1$, and let $1 < p < \infty$. For any $\theta \in (\frac{n/p}{\beta+n/p},1]$ there is $C > 0$ such that whenever $f\in C^{\beta}(\overline{\Omega})$ satisfies 
\begin{align*}
\|f\|_{L^p(\Omega)} &\leq M_0, \\
\|f\|_{C^{\beta}(\overline{\Omega})} &\leq M_1,
\end{align*}
then 
\[
\|f\|_{L^{\infty}(\Omega)} \leq C M_0^{1-\theta} M_1^{\theta}.
\]
\end{lemma}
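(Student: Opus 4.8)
The plan is to establish a pointwise bound at an arbitrary point $x_0 \in \overline{\Omega}$ by comparing the value $|f(x_0)|$ with the average of $f$ over a small ball of radius $r$ around $x_0$, balancing the H\"older modulus of continuity against the $L^p$ mass, and then optimizing in $r$. First I would fix $x_0$ and $r > 0$ small, and write, using that $\Omega$ is a bounded Lipschitz domain so that $|\Omega \cap B(x_0,r)| \geq c r^n$ for some $c = c(\Omega) > 0$ and all sufficiently small $r$,
\[
|f(x_0)| \leq \frac{1}{|\Omega \cap B(x_0,r)|} \int_{\Omega \cap B(x_0,r)} |f(x_0) - f(y)| \,dy + \frac{1}{|\Omega \cap B(x_0,r)|} \int_{\Omega \cap B(x_0,r)} |f(y)| \,dy.
\]
The first term is bounded by $\|f\|_{C^\beta(\overline{\Omega})} r^\beta \leq M_1 r^\beta$ since $|f(x_0)-f(y)| \leq M_1 |x_0-y|^\beta \leq M_1 r^\beta$. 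For the second term, H\"older's inequality with exponents $p$ and $p'$ gives
\[
\int_{\Omega \cap B(x_0,r)} |f(y)| \,dy \leq \|f\|_{L^p(\Omega)} |\Omega \cap B(x_0,r)|^{1/p'} \leq M_0 (C r^n)^{1/p'},
\]
so dividing by $|\Omega \cap B(x_0,r)| \geq c r^n$ yields a bound of the form $C M_0 r^{-n/p}$. Altogether $|f(x_0)| \leq C(M_1 r^\beta + M_0 r^{-n/p})$ for all small $r > 0$.

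Next I would optimize over $r$. The two terms balance when $M_1 r^\beta \sim M_0 r^{-n/p}$, i.e. $r^{\beta + n/p} \sim M_0/M_1$, giving $r \sim (M_0/M_1)^{1/(\beta+n/p)}$ and a bound $|f(x_0)| \leq C M_0^{\theta_0} M_1^{1-\theta_0}$ with $\theta_0 = \frac{\beta}{\beta+n/p}$; equivalently, matching the notation of the statement where the exponent on $M_0$ is $1-\theta$, this is the endpoint $1-\theta = \theta_0$, i.e. $\theta = \frac{n/p}{\beta+n/p}$. For general $\theta$ in the stated range $(\frac{n/p}{\beta+n/p},1]$ one does not choose the optimal $r$ but instead picks $r$ so that the exponents come out to the desired $\theta$: writing the bound as $C M_1^{1-\theta} M_0^\theta$ requires choosing $r$ with $r^\beta = M_1^{-\theta} M_0^\theta / (\text{something})$ — more cleanly, one checks that for any admissible $\theta$ there is a choice $r = r(\theta, M_0, M_1)$ making both $M_1 r^\beta \lesssim M_0^\theta M_1^{1-\theta}$ and $M_0 r^{-n/p} \lesssim M_0^\theta M_1^{1-\theta}$; this works precisely when $\theta > \frac{n/p}{\beta+n/p}$, which is why that strict lower bound appears (at the endpoint only $r$ optimal works, and the constant is still fine, hence the closed interval; the upper endpoint $\theta = 1$ corresponds to just $\|f\|_{L^\infty} \leq \|f\|_{C^0} \leq \|f\|_{C^\beta}$, trivially). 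One must also handle the regime where $r$ as chosen would exceed the scale $r_0(\Omega)$ below which the Lipschitz ball-volume estimate holds: in that case $M_0/M_1$ is bounded below, so $M_0 \sim M_0^\theta M_1^{1-\theta}$ up to a constant and the trivial bound $\|f\|_{L^\infty} \leq \|f\|_{C^\beta} \leq M_1 \lesssim M_0^{1-\theta}M_1^\theta$... — more carefully, one just uses $\|f\|_{L^\infty(\Omega)} \leq C(\Omega)(\|f\|_{L^p}+\|f\|_{C^\beta})$ together with $M_0 \lesssim M_1$ (or $M_1 \lesssim M_0$) to absorb into the desired product; the details here are routine bookkeeping.

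The main obstacle is not any single estimate — each is elementary — but rather getting the exponent range exactly right: tracking which choices of $r$ are available as $M_0/M_1$ ranges over $(0,\infty)$, and confirming that the stated open-below, closed-above interval for $\theta$ is precisely the set for which a uniform constant $C$ exists. I would therefore devote the most care to the case analysis $M_0 \lesssim M_1$ versus $M_1 \lesssim M_0$ and to verifying the endpoint behaviour, while the ball-average inequality and H\"older step are standard. (An alternative, essentially equivalent, route is to invoke a known interpolation inequality $\|f\|_{L^\infty} \lesssim \|f\|_{L^p}^{1-\theta_0}\|f\|_{C^\beta}^{\theta_0}$ at the sharp exponent and then weaken $\theta_0$ to any larger $\theta$ using $\|f\|_{C^\beta}^{\theta_0} = \|f\|_{C^\beta}^{\theta} \cdot \|f\|_{C^\beta}^{\theta_0-\theta}$ combined with $\|f\|_{L^p} \leq C\|f\|_{C^\beta}$; but the direct ball-average argument above is self-contained and preferable here.)
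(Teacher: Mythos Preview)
Your argument is correct and takes a genuinely different route from the paper. The paper identifies $C^{\beta}(\overline{\Omega})$ with the Besov space $B^{\beta}_{\infty\infty}(\Omega)$, embeds (with an $\varepsilon$-loss) into a Sobolev space $W^{\beta-\varepsilon,q}(\Omega)$, applies complex interpolation between $W^{0,p}$ and $W^{\beta-\varepsilon,q}$, and finishes with the Sobolev embedding $W^{s_\theta,r_\theta}\subset L^\infty$ once $s_\theta>n/r_\theta$; this machinery is what forces the strict inequality $\theta>\frac{n/p}{\beta+n/p}$. Your ball-average argument is entirely elementary, uses only the Lipschitz cone condition $|\Omega\cap B(x_0,r)|\geq c r^n$, H\"older's inequality, and the optimization in $r$, and in fact yields the sharp endpoint $\theta=\frac{n/p}{\beta+n/p}$ as well (the weakening to larger $\theta$ then follows immediately from $\|f\|_{L^p}\leq C(\Omega)\|f\|_{C^\beta}$, as you note). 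The only place requiring care in your write-up is the regime where the optimal $r$ would exceed the Lipschitz scale $r_0(\Omega)$; there one should observe that without loss of generality $M_0=\|f\|_{L^p}$ and $M_1=\|f\|_{C^\beta}$ (the right-hand side being monotone in both), so that $M_0\leq |\Omega|^{1/p}M_1$ and the ratio $M_0/M_1$ is bounded above, after which the case is indeed routine.
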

\begin{proof}
Recall that, for $0<\beta < 1,$ the H\"older space $C^{\beta}(\overline{\Omega})$ is precisely the Besov space $B^{\beta}_{\infty\infty}(\Omega)$. We also denote by $W^{s,p}(\Omega)$ the $L^p$ Sobolev space with smoothness index $s$.

By the results in \cite[Section 23]{Triebel:1997}, whenever $\varepsilon > 0$ and $2 \leq q < \infty$ one has the continuous embeddings (which in fact hold for any bounded domain $\Omega$, not necessarily Lipschitz):
\[
B^{\beta}_{\infty\infty}(\Omega) \subset B^{\beta-\varepsilon}_{q2}(\Omega) \subset F^{\beta-\varepsilon}_{q2}(\Omega) = W^{\beta-\varepsilon,q}(\Omega).
\]
Thus we see that 
\begin{align*}
\norm{f}_{W^{0,p}(\Omega)} &\leq M_0, \\
\norm{f}_{W^{\beta-\varepsilon,q}(\Omega)} &\leq C M_1.
\end{align*}
By complex interpolation \cite[Theorem 2.13]{Triebel:2002}, we obtain for any $0 \leq t \leq 1$ that 
\[
\norm{f}_{W^{s_t,r_t}(\Omega)} \leq M_0^{1-t} (C M_1)^{t}
\]
where $s_t = t(\beta-\varepsilon)$ and $\frac{1}{r_t} = (1-t) \frac{1}{p} + t \frac{1}{q}$.

Now fix $\theta$ with $\frac{n/p}{\beta+n/p} < \theta \leq 1$, and fix $\varepsilon > 0$ and $2 \leq q < \infty$ so that $\theta(\beta - \varepsilon) > \frac{n}{r}$ where $\frac{1}{r} = (1-\theta) \frac{1}{p} + \theta \frac{1}{q}$ (this condition is equivalent with $\theta(\beta - \varepsilon + \frac{n}{p} - \frac{n}{q}) > \frac{n}{p}$, and such $\varepsilon$ and $q$ exist since $\theta > \frac{n/p}{\beta+n/p}$). Choosing $t = \theta$ above and using the continuous embedding $W^{s_{\theta}, r_{\theta}}(\Omega) \subset L^{\infty}(\Omega)$, which follows from \cite[Section 23]{Triebel:1997} since $s_{\theta} > n/r_{\theta}$, we get  
\[
\norm{f}_{L^{\infty}(\Omega)} \leq C M_0^{1-\theta} (C M_1)^{\theta}
\]
as required. (After the initial embeddings, one could also use the universal extension operator for Lipschitz domains \cite[Theorem 2.11]{Triebel:2002} and work in $\mathbb{R}^n$.)
\end{proof}

\subsection{Unique continuation principle for weighted $p$-harmonic functions in the plane}
In this subsection, we sketch a proof of the unique continuation principle for the weighted $p$-Laplace equation
\begin{equation}\label{p-lap}
\mdiv(\sigma \abs{\nabla u}^{p-2} \nabla u) = 0
\end{equation}
in a bounded domain $\Omega \subset \mathbb{R}^2$ following \cite{bi84}. This result is also a special case of \cite[Proposition 3.3]{Alessandrini:Sigalotti:2001}. Indeed, according to a very recent work of \cite{Guo:Kar:2015}, even the strong unique continuation principle holds for the solutions of~\eqref{p-lap}. Assume that $\sigma$ is positive and Lipschitz continuous in $\Omega$. 

We first consider the case $p\geq 2$. Let us define a vector field $F : \Omega \rightarrow \mathbb{R}^2$ by 
$$F(x) = \sigma^{p/2}|\nabla u(x)|^{(p-2)/2}\nabla u(x),$$
where $u$ satisfies~\eqref{p-lap}. Then it follows from~\cite[Proof of Proposition 2]{bi84} that $F\in W^{1,2}_{loc}(\Omega, \R^2)$.

We write $f = \sigma u_x - i\sigma u_y$ for the complex gradient of $u$ in the complex plane, where $z = x+iy$, and define the nonlinear counterparts of the complex gradient $f$ by $F_a = |f|^a f, a>-1$. In the following computations, we will derive the nonlinear first order elliptic system for $F$, that is $F_a$ with $a = (p-2)/2$. 
		
From the definition of $f$ and $F_a$, we have
		\[
		2u_x = \frac{1}{\sigma}|F_a|^{-\frac{a}{a+1}}\big(F_a + \overline{F_a}\big)
		\]
		and 
		\[
		2u_y = i \frac{1}{\sigma}|F_a|^{-\frac{a}{a+1}}\big(F_a - \overline{F_a}\big).
		\]
Therefore from the above equalities we have,
		\[
		\frac{\partial}{\partial y}\Big[\frac{1}{\sigma}|F_a|^{-\frac{a}{a+1}}(F_a + \overline{F_a})\Big]
		= i \frac{\partial}{\partial x}\Big[\frac{1}{\sigma}|F_a|^{-\frac{a}{a+1}}(F_a - \overline{F_a})\Big].
		\]
This is equivalent to
		\begin{equation}\label{ImcO}
		\text{Im} \frac{\partial}{\partial\bar{z}}\Big(\frac{1}{\sigma}|F_a|^{-\frac{a}{a+1}}F_a\Big) = 0. 
		\end{equation}
Note that, for $a={(p-2)}/2$, $F_a$ is differentiable almost everywhere and so \eqref{ImcO} reduces to the complex equation
		\begin{equation}\label{comEq}
		\begin{split}
		\frac{\partial}{\partial\bar{z}}F_a - \overline{\frac{\partial}{\partial\bar{z}}F_a}
		= &-\frac{a}{a+2}\left[\frac{\bar{F}_a}{F_a} \frac{\partial}{\partial z} F_a - \frac{F_a}{\bar{F}_a} \overline{\frac{\partial}{\partial z} F_a}\right] \\
		& + \sigma \frac{2a+2}{a+2}\left[\bar{F}_a \frac{\partial}{\partial z}\left(\frac{1}{\sigma}\right) - F_a \frac{\partial}{\partial \bar{z}}\left(\frac{1}{\sigma}\right)\right]. 
		\end{split}
		\end{equation}
On the other hand, since $u$ satisfies weighted $p$-Laplacian equation $\nabla\cdot(\sigma |\nabla u|^{p-2}\nabla u ) = 0$, we have
		\[
		\nabla\cdot\left[\frac{1}{\sigma^{p-2}}|F_a|^{\frac{p-2-a}{a+1}}(F_a+\overline{F_a}, i(F_a-\overline{F_a})) \right] = 0,
		\]
which is equivalent to the equation
		\[
		\frac{\partial}{\partial x}\left\{\frac{1}{\sigma^{p-2}}|F_a|^{\frac{p-2-a}{a+1}} (F_a+\overline{F_a}) \right\}
		+ i \frac{\partial}{\partial y}\left\{\frac{1}{\sigma^{p-2}}|F_a|^{\frac{p-2-a}{a+1}} (F_a-\overline{F_a}) \right\} = 0.
		\]
Using the complex notation we can write
		\[
		\text{Re} \frac{\partial}{\partial\bar{z}}\left(\frac{1}{\sigma^{p-2}}|F_a|^{\frac{p-2-a}{a+1}}F_a \right) = 0.
		\]
For $a=(p-2)/2$, $F_a$ is differentiable almost everywhere and so the last equation can be written as
		\begin{equation}\label{comEq2}
		\begin{split}
		\frac{\partial}{\partial\bar{z}}F_a + \overline{\frac{\partial}{\partial\bar{z}}F_a} 
		=& -\frac{p-2-a}{a+p}\left[\frac{\overline{F_a}}{F_a} \frac{\partial}{\partial z} F_a + \frac{F_a}{\overline{F_a}} \overline{\frac{\partial}{\partial z} F_a}\right] \\
		& - \sigma^{p-2} \frac{2a+2}{a+p}\left[F_a\frac{\partial}{\partial \bar{z}}\left(\frac{1}{\sigma^{p-2}}\right) + \overline{F_a} \frac{\partial}{\partial z}\left(\frac{1}{\sigma^{p-2}}\right)\right]. 
		\end{split}
		\end{equation}
Adding \eqref{comEq} and \eqref{comEq2}, we get (with $a=(p-2)/2$) 
		\begin{align}\label{eq:for Fa}
		\frac{\partial}{\partial \bar{z}}F=q_1\frac{\partial}{\partial z}F+q_2\overline{\frac{\partial}{\partial z}F}+H(z,F),
		\end{align}
where
		\[
		q_1 = -\frac{1}{2}\left(\frac{p-2}{p+2} + \frac{p-2}{3p-2} \right)\frac{\overline{F}}{F},
		\]
		\[
		q_2 = -\frac{1}{2}\left(\frac{p-2}{3p-2} - \frac{p-2}{p+2} \right)\frac{F}{\overline{F}}
		\]
and
		\[
		\begin{split}
		H(z,F) 
		=&  \sigma \frac{p}{p+2}\left[\overline{F} \frac{\partial}{\partial z}\left(\frac{1}{\sigma}\right) - F \frac{\partial}{\partial \bar{z}}\left(\frac{1}{\sigma}\right)\right] \\
		&- \sigma^{p-2} \frac{p}{3p-2}\left[ \overline{F} \frac{\partial}{\partial z}\left(\frac{1}{\sigma^{p-2}}\right) + F\frac{\partial}{\partial \bar{z}}\left(\frac{1}{\sigma^{p-2}}\right)\right].
		\end{split}
		\]
 It is easy to check that $|q_1|+|q_2|<1$ and $|H(z,F)|\leq q_3(z)|F|$ with $q_3\in L^\infty$. Under these structure assumptions for $q_1$, $q_2$ and $q_3$, by~\cite[Section 8.4]{b09}, the solution of~\eqref{eq:for Fa} can be represented as
\begin{align}\label{eq:representation formula}
F(z)=H(\chi(z))e^{\varphi(z)},
\end{align}
where $H$ is analytic, $\chi$ is quasiconformal and $\varphi$ is H\"older continuous in $\overline{\Omega}$ with $\varphi_{\bar{z}}, \varphi_{z}\in L^q(\Omega)$ for some $q>2$. Write $R(\xi)=|\xi|^{\frac{2a+2-p}{2p}}\xi$ and it is clear that 
\begin{align}\label{eq:general Fa rep}
F_a(z)=R\circ F_{(p-2)/2}=\big|H(\chi(z))\big|^\beta H(\chi(z))e^{(\beta+1)\varphi(z)}.
\end{align}
Note that the function $G_\beta:=\big|H(\chi(z))\big|^\beta H(\chi(z))$ is quasiregular as being the composition of a quasiregular mapping with an analytic function. 

Suppose now $u$ is constant on an open subset of $\Omega$, then $F_a$ will vanish on that open subset, which together with the observation that $e^{(\beta+1)\varphi(z)}$ is non-zero, implies that $G_\beta$ is zero on that open subset. Since $G_\beta$ is quasiregular, it is either constant or both discrete and open, and $G_\beta$ being zero in an open subset of $\Omega$ necessarily forces $G_\beta$ to be zero everywhere in $\Omega$. This implies that $F_a\equiv 0$ in $\Omega$ and hence $u$ is identically constant in $\Omega$.

The case $p\in (1,2)$ can be treated identically as above, provided that we are able to show $F\in W^{1,2}_{loc}(\Omega,\R^2)$. Note that since the regularity is a local issue, we may assume that $\Omega$ is a simply connected bounded domain. As in~\cite[Page 426-427]{aim09}, this can be done by a very elegant argument involving
the weighted dual $q$-harmonic equation, where $\frac{1}{p} + \frac{1}{q} = 1$. Namely, there exists a weighted $q$-harmonic function $v\in W^{1,q}_{loc}(\Omega)$ satisfying $$\dive({\sigma}^{1-q}|\nabla v|^{q-2}\nabla v) =0$$ such that
\[
v_x = -\sigma|\nabla u|^{p-2}u_y \ \text{and}\ v_y = \sigma|\nabla u|^{p-2}u_x.
\] 
Since $q>2$ and $\sigma^{1-q}$ is positive and Lipschitz, it follows again from \cite[Proof of Proposition 2]{bi84} that $|\nabla v|^{(q-2)/2}\nabla v\in W^{1,2}_{loc}(\Omega,\R^2)$ and so $F\in W^{1,2}_{loc}(\Omega,\R^2)$ as desired. 

\bibliographystyle{alpha}
\bibliography{math}

\end{document}